\documentclass[preprint,12pt]{elsarticle}
\usepackage{amsmath, amssymb, amsthm, a4wide}
\usepackage{graphicx}
\usepackage{dsfont}
\usepackage{url}
\usepackage[latin1]{inputenc}
\usepackage{multirow}
\usepackage[font=footnotesize]{subfig}
\setlength{\parskip}{12pt}
\usepackage{array}
\usepackage{mathdots}
\usepackage[colorlinks,citecolor=blue,urlcolor=blue]{hyperref}
\usepackage{color}

\newcommand{\F}{\text{F}}

\newcommand{\x}{\mathbf{x}}
\newcommand{\y}{\mathbf{y}}

\newcommand{\n}{\mathbf{n}}

\renewcommand{\L}{\mathbf{\Lambda}}

\newcommand{\e}{\mathrm{e}}
\newcommand{\W}{\mathbf{W}}
\newcommand{\HH}{\mathbf{H}}
\newcommand{\M}{\mathbf{M}}
\newcommand{\R}{\mathbb{R}}

\newcommand{\C}{\mathbb{C}}
\newcommand{\E}{\mathbb{E}}

\newcommand{\I}{\mathbf{I}}

\newcommand{\B}{\mathbf{B}}

\newcommand{\0}{\mathbf{0}}
\newcommand{\A}{\mathbf{A}}
\renewcommand{\S}{\mathbf{S}}
\newcommand{\dif}{\mathrm{d}}

\theoremstyle{plain}
\newtheorem{theorem}{Theorem}

\newtheorem{lemma}{Lemma}
\newtheorem{corollary}{Corollary }
\newtheorem{remark}{Remark}
\theoremstyle{definition}
\newdefinition{assumption}{Assumption}

\allowdisplaybreaks[2]

\begin{document}
\begin{frontmatter} 
\title{Asymptotic Linear Spectral Statistics for Spiked Hermitian\\ Random Matrix Models}
\author[ust]{Damien Passemier\footnote{The work of D. Passemier and M. R. McKay was supported by the Hong Kong Research Grants Council (RGC) under grant number 616911.}}
\ead{damien.passemier@gmail.com}

\author[ust]{Matthew R. McKay}
\ead{eemckay@ust.hk}

\author[umac]{Yang Chen\footnote{The work of Y. Chen was supported by the FDCT grant 077/2012/A3.}}
\ead{yayangchen@umac.mo}

\address[ust]{Department of Electronic and Computer Engineering, Hong Kong University of Science and Technology,\\ Clear Water Bay, Kowloon, Hong Kong}
\address[umac]{Department of Mathematics, University of Macau, Avenue Padre Tomás Pereira, Taipa Macau, China}

\setcounter{footnote}{0}
\begin{abstract}
Using the Coulomb Fluid method, this paper derives central limit theorems (CLTs) for linear spectral statistics of three ``spiked'' Hermitian random matrix ensembles. These include Johnstone's spiked model (i.e., central Wishart with spiked correlation), non-central Wishart with rank-one non-centrality, and a related class of non-central $F$ matrices.  For a generic linear statistic, we derive simple and explicit CLT expressions as the matrix dimensions grow large.  For all three ensembles under consideration, we find that the primary effect of the spike is to introduce an $O(1)$ correction term to the asymptotic mean of the linear spectral statistic, which we characterize with simple formulas.  The utility of our proposed framework is demonstrated through application to three different linear statistics problems: the classical likelihood ratio test for a population covariance, the capacity analysis of multi-antenna wireless communication systems with a line-of-sight transmission path, and a classical multiple sample significance testing problem.  
\end{abstract}

\begin{keyword}
random matrix theory \sep high-dimensional statistics \sep spiked population model \sep Wishart distribution \sep $F$-matrix \sep MIMO systems \sep hypothesis testing.
\end{keyword}
\end{frontmatter}

\section{Introduction}
\setcounter{footnote}{0}
In multivariate analysis, many statistics of interest can be written as a sum of functions of eigenvalues of a sample covariance matrix or an $F$ matrix.  These are often referred to as ``linear spectral statistics''. In classical settings, the asymptotic distribution of such statistics has been studied extensively, typically assuming that the sample size $m$ is large whilst the data dimension $n$ is fixed. Modern applications, however, are often characterized by high-dimensional data sets, with $m$ and $n$ comparable.  Representative examples include financial portfolios involving large numbers of assets \cite{Ledoit2004,Rubio2012},  biomedical data sets involving large nucleotide or protein arrays \cite{Dahirel, Quadeer2013}, or modern wireless communication signals with large numbers of antennas \cite{Hoydis,Matthaiou}.  In such cases, statistical results based on classical asymptotic analysis are no longer accurate or meaningful. 

Motivated by the above considerations, there has been much recent interest in evaluating the asymptotic distribution of linear statistics for high-dimensional data models. Such models have been studied using ideas from random matrix theory to evaluate the asymptotic distribution under ``double asymptotics'', in which the data dimension and number of samples are both large and of similar order. In particular, this has led to the derivation of central limit theorems (CLTs) for linear spectral statistics of sample covariance matrices in \cite{Chen,Lytova} , as well as for various other random matrix ensembles (see, e.g.,  \cite{Diaconis,Zheng}). These results provide generic asymptotic formulas for the mean and variance of the limiting Gaussian distribution, and have been utilized for various applications (see e.g., \cite{Bai09, Chen2, Bai13}, among others).

Thus far, most results along this line assume that the population covariance matrix is the identity (e.g., \cite{Chen,Lytova,AndersonCLT}). In hypothesis testing problems, this allows one to capture information under the null hypothesis, but not the alternative. A more general model, introduced by Johnstone \cite{Johnstone}, is the so-called ``spiked'' model, for which the population covariance matrix has all of its eigenvalues equal, except for a fixed few (referred to as the spike eigenvalues). Such models have attracted considerable attention. A major focus thus far has been on characterizing the statistical behavior of the extreme eigenvalues \cite{Baik05, BaikJack, Paul, Bai1, Nadakuditi, BaiYao12}, and this has found various applications in finance, signal processing, wireless communication and networking, to name a few (see \cite{Rao2, Torun, CouilletBook, Bianchi, Couillet} and references therein). In contrast to the extreme eigenvalues, there has been considerably less work dealing with linear spectral statistics of spiked matrix models. A key exception is the very recent work \cite{Wang3} which derived a CLT expression for linear spectral statistics for Johnstone's spiked model, based on employing the results from\footnote{The paper \cite{Bai04} considered a more general model than the spiked model. Therein, CLT results were presented for linear spectral statistics, with the key quantities involving solutions to implicit equations.}  \cite{Bai04}. That result is given in terms of contour integrals. The results in \cite{Wang3} have been subsequently applied to some specific linear spectral statistics in \cite{Passemier,Wang2}.
 
In addition to Johnstone's spiked model, it turns out that alternative random matrix models exist which have close analogies. These include non-central Wishart matrices with rank-one non-centrality parameter (representing a spike), and a related class of non-central $F$ matrices.  In this paper, we will deal with all three classes of matrices, which we refer to collectively as spiked ensembles. We focus on basic models with a single spike, and with a single linear statistic. Natural extensions to account for multiple linear statistics (e.g., problems of the type considered in \cite{Zheng,Bai04}) and also to account for multiple spikes (such as the models considered in \cite{Wang3}) are interesting and non-trivial, and these will be considered in future work.
For each class, we derive new general CLT formulas for arbitrary linear statistics. In all three cases, we demonstrate that the effect of the spiked eigenvalue is to induce an $O(1)$ correction term to the mean of the asymptotic Gaussian distribution, whilst not affecting the leading order terms of either the mean or variance. These results are consistent with previous phenomena observed in \cite{Wang3} for Johnstone's spiked model. For each of the three models under consideration, we explicitly characterize the correction term via a remarkably simple formula involving only a single basic integral, which may be solved for any given linear statistic of interest.     

To highlight the utility of our general results, we provide three representative example applications, one for each matrix model.  For Johnstone's spiked correlation model, we examine a classical likelihood ratio test (LRT) statistic for the population covariance.  Through our framework, we extract a known CLT result derived recently in \cite{Wang3, Onatski}. This is achieved very efficiently (in the manner of a few lines), in contrast with the derivation in \cite{Onatski}, which relied on sophisticated tools of contiguity and Le Cam's lemmas (see \cite{Vandervaart}). Furthermore, it serves as an alternative of the calculation in \cite{Wang3}. For the non-central Wishart and non-central matrix $F$ models, we present new results using our framework.  In the first case, we consider the mutual information of multiple-input multiple-output (MIMO) wireless communication systems with a direct line-of-sight (LoS) transmission path, and derive a new CLT expression for the  asymptotic distribution of this quantity with large numbers of antennas.  For the non-central $F$ model, a CLT is derived for a classical 
multiple sample significance test with high-dimensional data, under an appropriate alternative hypothesis.  This new result is complementary to the recent result in \cite{Bai13}, which derived a corresponding CLT under the null. 

The derivations in this paper are based on the Coulomb Fluid approach of random matrix theory. This approach was originally introduced by Dyson \cite{Dyson}, and has been used extensively among the mathematical physics community for deriving large dimensional asymptotics of various random matrix ensembles (see e.g., \cite{Chen,Chen+Manning, chemancond, Chen+Ismail,  BasorChen, Simon2006, Vivo2007, Vivo2008, Dean2008, Katzav2010}). Such tools have also recently found use in the information theoretic analysis of wireless communication systems \cite{Chen2,Kasakopoulos,Li,ChenHaq}. Most relevant to the current paper is the work of Chen and Lawrence \cite{Chen}, which applied the Coulomb Fluid approach to derive CLTs for linear spectral statistics of classical random matrix ensembles, in the absence of spiked eigenvalues. To our knowledge, prior to the current work, such tools had yet to be applied to spiked random matrix models.

In this paper, we demonstrate that the Coulomb Fluid approach can be naturally applied for spiked random matrix ensembles, upon expressing the joint eigenvalue densities of the ensembles via convenient contour integral representations.  For Johnstone's spiked model and the non-central Wishart model, such representations were discovered recently in \cite{Onatski,Wang,Mo} and \cite{Dharmawansa} respectively, whilst for the matrix $F$ model, we derive such a representation in the current paper, which also constitutes a new result.

\bigskip
{\em Notation.} All columns vectors and matrices are denoted by lowercase and uppercase boldface characters respectively. The conjugate transpose of a matrix $\mathbf{A}$ is $\mathbf{A}^\dag$. $\mathbf{I}_n$ is the identity matrix of size $n \times n$,  whereas $\mathbf{0}_{n\times m}$ is the $n \times m$ matrix of all zeros. $\E(X)$ denotes the expectation of the random variable $X$. $\C\mathcal{W}_n \left (m,\mathbf{\Sigma},\mathbf{\Theta}\right)$ denotes the complex Wishart distribution of size $n$ with $m$ degrees of freedom, scale matrix $\mathbf{\Sigma}$ and non-centrality matrix $\mathbf{\Theta}$. $\mathcal{N}(\mu,\sigma^2)$ denotes the Gaussian distribution with mean $\mu$ and variance $\sigma^2$, whereas $\C\mathcal{N}(\mathbf{u},\mathbf{\Sigma})$ denotes the circularly-symmetric complex Gaussian distribution with mean $\mathbf{u}$ and covariance matrix $\mathbf{\Sigma}$. We use $\overset{\mathcal{L}}{\rightarrow}$ to denote convergence in distribution, and $\mathcal{P}$ to denote Cauchy principal value when dealing with principal value integrals.

\section{Matrix Models and Eigenvalue Distributions}\label{sec:models}
We consider the following three ``spiked'' random matrix models:
\begin{list}{$\bullet$}{\leftmargin=2em}
\item {\em Model A:  Spiked central Wishart:} \\ Matrices with distribution $\C\mathcal{W}_n \left (m,\mathbf{\Sigma},\mathbf{0}_{n\times n} \right)$ ($m\ge n$), where $\mathbf{\Sigma}$ has one ``spike'' eigenvalue equal to $1+\delta$ with $\delta \ge0$, and all other eigenvalues equal to $1$. 
\item {\em Model B: Spiked non-central Wishart:}\\
Matrices with distribution $\C\mathcal{W}_n \left (m,\I_n,\mathbf{\Theta} \right)$ ($m\ge n$), where $\mathbf{\Theta}$ is rank $1$ (or zero) with ``spike'' eigenvalue $n\nu$ for $ \nu \ge0$.
\item {\em Model C: Spiked multivariate F:}\\
Matrices of the form
\[\mathbf{F}=\W_1\W_2^{-1}\text{,}\] where $\W_1 \sim \C\mathcal{W}_n \left (m_1,\mathbf{\Sigma},\mathbf{\Theta}\right)$ ($m_1> n$), $\W_2 \sim \C\mathcal{W}_n \left (m_2,\mathbf{\Sigma},\mathbf{0}_{n\times n} \right)$ ($m_2> n$) are independent, with $\mathbf{\Theta}$ rank $1$ (or zero) having ``spike'' eigenvalue $n\nu$ for $ \nu \ge0$.
\end{list}

For these three models, expressions for the joint probability density functions of the eigenvalues $x_k$, $1 \leq k \leq n$ (taken in the following to be unordered) are well-known in various forms; for example, in terms of zonal polynomials \cite{James} or a determinant \cite{Mehta}. Quite recently, however, it has been discovered that for Models A and B, the eigenvalue densities admit a particularly convenient contour integral representation 
\begin{align}
 \frac{K_{n}[l]}{2\pi\imath}  \oint_{C} l(z)  \prod_{1\leq j<k\leq n}(x_k-x_j)^2 \prod_{j=1}^n    \frac{ x_j^{m-n}e^{-x_j} }{z-x_j} \dif z\text{,} 
 \label{eq:density}
\end{align}
for $x_j  \in (0, \infty), 1 \leq j \leq n$,
where $K_{n}[l]$ is a normalization constant,  and the contour $C$ encloses counter-clockwise $x_1, \ldots, x_n$ in its interior. The function $l(x)$ captures the effect of the spiked eigenvalue and is given by  \cite{Wang,Dharmawansa} (see also \cite{Onatski,Mo})
\begin{equation}
  l(z) = \begin{cases}
    \exp\left ( \frac{\delta}{1+\delta}z\right), &  \text{for Model A} \\
    ~_0F_1(m-n+1,n \nu z),  &  \text{for Model B}
  \end{cases}\nonumber
\end{equation}
  where$~_pF_q(\cdot)$ represents a hypergeometric function. 

For Model C, it turns out that an analogous representation also exists. This is given by the following new result:
\begin{lemma} \label{th:1F1}
Under Model C, let $x_j \in (0, \infty), 1 \leq j \leq n$ denote the eigenvalues of ${\bf F}$. Then, the joint density of $\mathsf{f}_j=x_j/(1+x_j) \in (0,1)$, $1 \le j \le n$ has the form
\begin{align}
\frac{K_{n}}{2\pi\imath}
\oint_C\,_1F_1\left(m_1+m_2-n+1,m_1-n+1,n\nu z\right ) \prod_{j=1}^n \frac{\mathsf{f}_j^{m_1-n}(1-\mathsf{f}_j)^{m_2-n}}{z-\mathsf{f}_j} \prod_{1\le j < k \le n} (\mathsf{f}_k-\mathsf{f}_j)^2 \, \dif z\text{,}\label{eq:densityF}
\end{align}
where $K_{n}$ is a normalization constant, and the contour $C$ encloses counter-clockwise $\mathsf{f}_1, \ldots, \mathsf{f}_n$ in its interior.
\end{lemma}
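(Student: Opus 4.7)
The plan is to follow the derivation pattern that underlies the contour integral representations already established for Models~A and~B: first obtain the eigenvalue density of $\mathbf{F}$ in terms of a matrix-argument confluent hypergeometric function, then use the fact that $\mathbf{\Theta}$ has rank one to collapse this matrix hypergeometric into a single contour integral.

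First I would reduce the problem to $\mathbf{\Sigma} = \mathbf{I}_n$, since $\mathbf{W}_1\mathbf{W}_2^{-1}$ is similar to $\mathbf{\Sigma}^{-1/2}\mathbf{W}_1\mathbf{\Sigma}^{-1/2}\bigl(\mathbf{\Sigma}^{-1/2}\mathbf{W}_2\mathbf{\Sigma}^{-1/2}\bigr)^{-1}$, which absorbs $\mathbf{\Sigma}$. Writing the joint density of $(\mathbf{W}_1,\mathbf{W}_2)$ (the first factor involving a $\,_0F_1$ of matrix argument through the rank-one non-centrality), I would change variables via $\mathbf{W}_1 = \mathbf{F}\mathbf{W}_2$ (Jacobian $|\mathbf{W}_2|^n$) and integrate out $\mathbf{W}_2$ with a standard Gauss-type zonal-polynomial integral. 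This produces the matrix density
\[
p_\mathbf{F}(\mathbf{F}) \propto \mathrm{etr}(-\mathbf{\Theta})\,|\mathbf{F}|^{m_1-n}|\mathbf{I}_n+\mathbf{F}|^{-(m_1+m_2)}\,_1F_1\!\left(m_1+m_2;m_1;\mathbf{\Theta}\mathbf{F}(\mathbf{I}_n+\mathbf{F})^{-1}\right).
\]

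Next I would diagonalise $\mathbf{F}$ and integrate out the unitary factor, which introduces the Vandermonde squared and turns the matrix-argument $\,_1F_1$ into the two-matrix-argument version evaluated at $(\mathbf{\Theta},\mathbf{X}(\mathbf{I}_n+\mathbf{X})^{-1})$ with $\mathbf{X}=\mathrm{diag}(x_1,\dots,x_n)$. Changing variables to $\mathsf{f}_j = x_j/(1+x_j)$ handles everything cleanly: the Jacobian contributes $(1-\mathsf{f}_j)^{-2}$ per coordinate, the rational identity $(x_k-x_j) = (\mathsf{f}_k-\mathsf{f}_j)/[(1-\mathsf{f}_k)(1-\mathsf{f}_j)]$ rewrites the Vandermonde, and all $(1-\mathsf{f}_j)$ exponents collect into the desired $\prod_j\mathsf{f}_j^{m_1-n}(1-\mathsf{f}_j)^{m_2-n}$. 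The remaining hypergeometric factor is $\,_1F_1(m_1+m_2;m_1;\mathbf{\Theta},\mathbf{B})$, with $\mathbf{B}=\mathrm{diag}(\mathsf{f}_1,\dots,\mathsf{f}_n)$.

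The decisive and most delicate step is the rank-one reduction. Because only single-row partitions contribute when $\mathbf{\Theta}$ has rank one, the complex zonal (Schur) expansion gives
\[
\,_1F_1(m_1+m_2;m_1;\mathbf{\Theta},\mathbf{B}) = \sum_{k\ge 0}\frac{(m_1+m_2)_k}{(m_1)_k\,(n)_k}\,(n\nu)^k\,h_k(\mathsf{f}_1,\dots,\mathsf{f}_n),
\]
with $h_k$ the complete homogeneous symmetric polynomial. I would then invoke the Lagrange-interpolation identity $\sum_j \mathsf{f}_j^{r}\big/\prod_{\ell\neq j}(\mathsf{f}_j-\mathsf{f}_\ell) = h_{r-n+1}(\mathbf{f})$ (which vanishes for $r<n-1$), reindex the single summation by $r=k+n-1$, and use $(a)_{k+n-1} = (a)_{n-1}(a+n-1)_k$ to rebalance the Pochhammer products. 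This manoeuvre is what produces the shifts $m_1+m_2\mapsto m_1+m_2-n+1$ and $m_1\mapsto m_1-n+1$ and recognises the resummed series as the scalar $\,_1F_1(m_1+m_2-n+1;m_1-n+1;n\nu z)$. The sum of residues is then rewritten as the contour integral in~\eqref{eq:densityF}, and all $k$-independent constants are absorbed into $K_n$. The main obstacle is precisely this Pochhammer bookkeeping: one has to show that the ratio between the two series is independent of the summation index, which is the nontrivial content of the rank-one reduction and the source of the parameter shift.
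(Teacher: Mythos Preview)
Your proposal is correct and follows essentially the same route as the paper: the paper also starts from James's eigenvalue density with the two-matrix $\,_1\tilde F_1$, collapses to single-row partitions via the rank-one condition, and then converts the resulting $h_k$ series into a contour integral whose summand is recognised as the scalar $\,_1F_1$ with shifted parameters. The only cosmetic difference is that the paper reaches the contour integral through the generating-function identity $h_k=\frac{1}{2\pi\imath}\oint_0\prod_j(1-z\mathsf f_j)^{-1}\,z^{-k-1}\,\dif z$ followed by the substitution $z\mapsto 1/z$ (and handles your ``vanishing for $r<n-1$'' observation by noting that the added low-order terms are analytic inside the contour), whereas you go directly via the Lagrange residue form; the Pochhammer reindexing is identical in both.
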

See Section \ref{sec:proof_th_pdf} for the proof.

Based on \eqref{eq:density} and \eqref{eq:densityF}, in the following we will compute the asymptotic distribution of linear spectral statistics for each of the three matrix models.  In taking asymptotics, for Models A and B, we will be concerned with the following limits:
\begin{assumption}\label{assumptionAB}
$m,n \rightarrow \infty$ such that $m/n \rightarrow c \geq 1$.
\end{assumption}
For Model C, we will be concerned with:
\begin{assumption}\label{assumptionC}
$m_1,m_2,n \rightarrow \infty$ such that $m_1/n \rightarrow c_1 > 1$ and $m_2/n \rightarrow c_2 > 1$.
\end{assumption}

\section{Main Results}\label{sec:mainresults}

The two theorems below present the main contributions of the paper.  In each case, $x_j  \in (0, \infty), 1 \leq j \leq n$ will represent the eigenvalues of each associated matrix model.

\begin{theorem}\label{th:wishart}
Consider Models A and B. Define
\begin{align}
a=(1-\sqrt{c})^2, \quad b=(1+\sqrt{c})^2 \text{.}   \label{eq:abWish}
\end{align}
Under Assumption \ref{assumptionAB}, for an analytic function $f:\mathcal{U}\mapsto \C$ where $\mathcal{U}$ is an open subset of the complex plane which contains $[a,b]$, we have
\begin{align}
\sum_{k=1}^n f \left ( \frac{x_k}{n} \right )\overset{\mathcal{L}}{\rightarrow} \mathcal{N}\left (n\mu + \bar{\mu}(z_0), \sigma^2 \right )\text{,}   \label{eq:ModABResult}
\end{align}
where
\begin{align}
\mu &=\frac{1}{2\pi}\int_a^b f(x) \frac{\sqrt{(b-x)(x-a)}}{x}\, \dif x  \label{eq:mu} \\
\sigma^2 &=\frac{1}{2\pi^2}\int_a^b \frac{f(x)}{\sqrt{(b-x)(x-a)}} \left [ \mathcal{P} \int_a^b \frac{f'(y)\sqrt{(b-y)(y-a)}}{x-y} \,\dif y \right ]\, \dif x \label{eq:V1} 
\end{align}
with these terms independent of the spike. The spike-dependent term $\bar{\mu}(z_0)$ admits
\begin{align}
\bar{\mu}(z_0)&=\frac{1}{2\pi} \int_a^b \frac{f(x)}{\sqrt{(b-x)(x-a)}}\left (\frac{\sqrt{(z_0-a)(z_0-b)}}{z_0-x}-1 \right )  \, \dif x    \label{eq:muVal}
\end{align}
where
\begin{align}
  z_0 &= \begin{cases}
    \frac{(1+c\delta)(1+\delta)}{\delta}, &  \text{for Model A} \\
   \frac{(1+\nu)(c+\nu)}{\nu},  &  \text{for Model B}\text{.} 
  \end{cases}\label{eq:exprz0}
\end{align}
The branch of the square root $\sqrt{(z_0-a)(z_0-b)}$ is chosen according to \emph{Remark \ref{rem:smodelA}} for Model A (see Section \ref{sec:SaddleA}) and \emph{Remark \ref{rem:smodelB}} for Model B (see Section \ref{sec:SaddleB}).
\end{theorem}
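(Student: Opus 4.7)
The plan is to apply a two-tier saddle-point / Coulomb Fluid analysis to the contour-integral representation \eqref{eq:density}, treating Models A and B in parallel since they differ only through the prefactor $l(z)$. Set $F_n := \sum_{k=1}^n f(x_k/n)$ and study the cumulant generating function $\Lambda_n(t) := \log \E[\exp(t F_n)]$ for small real $t$; the stated CLT is equivalent to the expansion
\begin{align*}
\Lambda_n(t) \;=\; t\bigl(n\mu + \bar\mu(z_0)\bigr) + \tfrac{1}{2}\sigma^2 t^2 + o(1).
\end{align*}
Writing $\E[e^{t F_n}] = \tilde Z_n(t)/\tilde Z_n(0)$ in terms of the modified partition function
\begin{align*}
\tilde Z_n(t) = \frac{K_n[l]}{2\pi\imath}\oint_C l(z)\,\Xi_n(z,t)\,\dif z,\qquad
\Xi_n(z,t) := \int_{(0,\infty)^n}\prod_{j<k}(x_k-x_j)^2\prod_j\frac{x_j^{m-n}e^{-x_j+tf(x_j/n)}}{z-x_j}\,\dif x_j,
\end{align*}
the task reduces to an $O(1)$-precise asymptotic expansion of $\Xi_n$ followed by an $O(1)$-precise evaluation of the $z$-integral.

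The inner integral $\Xi_n$ is a classical log-gas computation. Rescaling $x_j = n y_j$ one recognises a $\beta = 2$ Coulomb gas with external potential $V(y) = y-(c-1)\log y - tf(y)/n$ and the singular insertion $-n^{-1}\sum_j\log(\zeta-y_j)$, where $\zeta = z/n$. The equilibrium measure is the Marchenko--Pastur law on $[a,b]$, independent of $t$ and of $\zeta$ to leading order. Following the Chen--Lawrence scheme \cite{Chen}, a functional expansion of the free energy in $t$ produces the coefficients $n\mu$ and $\tfrac{1}{2}\sigma^2$ of \eqref{eq:mu} and \eqref{eq:V1}, with the principal-value integral in $\sigma^2$ arising from Tricomi's inversion of the equilibrium equation; the $z$-insertion contributes an additional term $-n\int_a^b\log(\zeta-y)\rho_{\mathrm{MP}}(y)\,\dif y + O(1)$ to the logarithm of $\Xi_n$.

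For the outer contour integration, the factor $l(z)$ is itself of exponential order in $n$: explicitly $\log l(n\zeta) = n\delta\zeta/(1+\delta)$ for Model A, and $\log l(n\zeta) \sim n\Phi_B(\zeta)$ for Model B via the large-parameter asymptotics of $\,_0F_1(m-n+1,n\nu z)$. Combining with the inner expansion, the $z$-integral becomes a single-variable saddle-point problem whose saddle equation matches $(\log l)'/n$ to the MP Stieltjes transform
\begin{align*}
\int_a^b\frac{\rho_{\mathrm{MP}}(y)}{\zeta-y}\,\dif y \;=\; \frac{1}{2c\zeta}\Bigl(\zeta+c-1-\sqrt{(\zeta-a)(\zeta-b)}\Bigr);
\end{align*}
solving this algebraic equation yields $\zeta_0 = z_0/n$ with $z_0$ as in \eqref{eq:exprz0} in both models, the branch of the root being fixed by the physical-sheet requirement that $\zeta_0$ lie outside $[a,b]$, in accordance with Remarks \ref{rem:smodelA}--\ref{rem:smodelB}. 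Differentiating the resulting saddle-point approximation of $\log(\tilde Z_n(t)/\tilde Z_n(0))$ at $t = 0$ and using the arcsine-weight identity $\int_a^b \dif y/[\pi(\zeta-y)\sqrt{(b-y)(y-a)}] = 1/\sqrt{(\zeta-a)(\zeta-b)}$ (valid for $\zeta\notin[a,b]$ on the correct branch), the $O(1)$ correction to the mean is seen to equal exactly $\bar\mu(z_0)$ of \eqref{eq:muVal}, the subtracted $-1$ originating from the normalisation against $\tilde Z_n(0)$.

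The hard part will be coupling the two tiers cleanly at the $O(1)$ level. Specifically: (i) one must verify that the $O(1)$ subleading contributions of the inner Coulomb Fluid expansion are independent of $\zeta$ at the saddle, so that they cancel in the ratio $\tilde Z_n(t)/\tilde Z_n(0)$; (ii) the original enclosing contour $C$ must be deformable to the steepest-descent contour through $\zeta_0$ without crossing the emerging cut $[a,b]$, which is delicate for Model B where the integrand inherits the branch structure of $\,_0F_1$; and (iii) uniform control of the large-parameter $\,_0F_1$ asymptotic along the deformed contour has to be established in order to justify exchanging the Coulomb Fluid and saddle-point limits. Once these analytic points are settled, the extraction of the explicit form of $\bar\mu(z_0)$ is essentially algebraic.
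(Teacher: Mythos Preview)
Your proposal follows essentially the same two-tier strategy as the paper's own proof: a Coulomb Fluid (Chen--Lawrence) expansion of the inner eigenvalue integral to extract $n\mu$, $\sigma^2$, and the $z$-dependent $O(n)$ and $O(1)$ terms, followed by a Laplace/saddlepoint evaluation of the outer contour integral in $z$ (with large-parameter Bessel asymptotics for $_0F_1$ in Model B) to pin down $z_0$ and hence $\bar\mu(z_0)$. Two minor points to tidy up: the MP Stieltjes transform in the paper's normalisation reads $\frac{1}{2\zeta}\bigl(\zeta-(c-1)-\sqrt{(\zeta-a)(\zeta-b)}\bigr)$ rather than $\frac{1}{2c\zeta}(\zeta+c-1-\cdots)$, and the subtracted $-1$ in \eqref{eq:muVal} arises directly from the principal-value evaluation of $\rho_2(x,z)$ (cf.\ \eqref{eq:rhoh}), not from dividing by $\tilde Z_n(0)$.
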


\begin{theorem} \label{th:F}
Consider Model C. Define
\begin{align}
a &= \frac{c_1(c_1+c_2-1)+c_2-2\sqrt{c_1c_2(c_1+c_2-1)}}{(c_1+c_2)^2}\text{,}  \nonumber \\
 b &= \frac{c_1(c_1+c_2-1)+c_2+2\sqrt{c_1c_2(c_1+c_2-1)}}{(c_1+c_2)^2} \text{.}  \label{eq:abFMat}
\end{align}
Under Assumption \ref{assumptionC}, for an analytic function $f:\mathcal{U}\mapsto \C$ where $\mathcal{U}$ is an open subset of the complex plane which contains $[a,b]$, we have
\begin{align} \label{eq:ModCResult}
\sum_{k=1}^n f \left ( x_k \right )\overset{\mathcal{L}}{\rightarrow} \mathcal{N}\left (n\mu_\F + \bar{\mu}_\F(z_0), \sigma_\F^2 \right )
\end{align}
where
\begin{align}
\mu_\F &=\frac{c_1+c_2}{2\pi}\int_a^b f\left(\frac{x}{1-x}\right) \frac{\sqrt{(b-x)(x-a)}}{x(1-x)}\, \dif x   \label{eq:muF} \\
\sigma_\F^2 &=\frac{1}{2\pi^2}\int_a^b \frac{f\left(\frac{x}{1-x}\right)}{\sqrt{(b-x)(x-a)}} \left [ \mathcal{P} \int_a^b \frac{f'\left(\frac{y}{1-y}\right)\sqrt{(b-y)(y-a)}}{x-y} \,\dif y \right ]\, \dif x  \label{eq:V1F} 
\end{align}
with these terms independent of the spike. The spike-dependent term $\bar{\mu}_\F(z_0)$ admits
\begin{align}
\bar{\mu}_\F(z_0)&=\frac{1}{2\pi} \int_a^b \frac{f\left(\frac{x}{1-x}\right)}{\sqrt{(b-x)(x-a)}}\left (\frac{\sqrt{(z_0-a)(z_0-b)}}{z_0-x}-1 \right )  \, \dif x    \label{eq:muValF}
\end{align}
where
\begin{align}
z_0 &=\frac{(1+\nu)(c_1+\nu)}{\nu(c_1+c_2+\nu)}\label{eq:exprz0F} \text{.}
\end{align}
The branch of the square root $\sqrt{(z_0-a)(z_0-b)}$ is chosen according to \emph{Remark \ref{rem:smodelC}} (see Section\ \ref{sec:SaddleC}).
\end{theorem}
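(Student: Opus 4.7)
The plan is to extend the Coulomb Fluid (Dyson gas) approach of Chen and Lawrence \cite{Chen} to the non-central matrix-$\F$ ensemble by combining standard Dyson-gas asymptotics for a Jacobi ensemble with a saddle-point treatment of the auxiliary variable $z$ in the contour integral of Lemma \ref{th:1F1}. First, I would use Lemma \ref{th:1F1} together with the bijection $\mathsf{f}=x/(1+x)$ between $(0,\infty)$ and $(0,1)$ to express the Laplace transform of $L_n(f)=\sum_{k=1}^n f(x_k)=\sum_{k=1}^n f(\mathsf{f}_k/(1-\mathsf{f}_k))$ as a ratio of contour integrals,
\[
\E\left[\exp(tL_n(f))\right]=\frac{\oint_C\,_1F_1(m_1+m_2-n+1,m_1-n+1,n\nu z)\,Z_n(t,z)\,\dif z}{\oint_C\,_1F_1(m_1+m_2-n+1,m_1-n+1,n\nu z)\,Z_n(0,z)\,\dif z},
\]
where
\[
Z_n(t,z)=\int_{(0,1)^n}\exp\Bigl(t\sum_k f(\mathsf{f}_k/(1-\mathsf{f}_k))\Bigr)\prod_{j=1}^n\frac{\mathsf{f}_j^{m_1-n}(1-\mathsf{f}_j)^{m_2-n}}{z-\mathsf{f}_j}\prod_{j<k}(\mathsf{f}_k-\mathsf{f}_j)^2\,\dif\mathsf{f}.
\]

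For fixed $z$ outside the spectrum, I would apply the Coulomb Fluid method to $Z_n(t,z)$, viewing $-\log Z_n(t,z)/n^2$ as a discrete approximation to a logarithmic energy with Jacobi external field perturbed by a single external charge at $z$. Standard potential-theoretic arguments, parallel to those used for the unperturbed Jacobi ensemble in \cite{Chen}, identify the equilibrium density $\rho$ supported on $[a,b]$ (with $a,b$ as in \eqref{eq:abFMat}) and yield a Gaussian fluctuation theorem for the empirical measure around $\rho$. Integrating $f(x/(1-x))$ against $\rho$ produces the leading mean $n\mu_\F$ in \eqref{eq:muF}, while the Gaussian covariance of the fluctuations produces the variance $\sigma_\F^2$ in \eqref{eq:V1F}. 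Both are independent of the spike because the single external charge at $z$ contributes only at orders $O(n)$ and below to $\log Z_n(t,z)$, and therefore cannot perturb the $O(n^2)$ equilibrium action or the $O(1)$ variance.

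Next I would extract the spike-dependent mean correction by saddle-point analysis of the outer $z$-integral. Using the Laplace-type asymptotic expansion of $\,_1F_1(m_1+m_2-n+1,m_1-n+1,n\nu z)$ when all three arguments scale linearly with $n$, together with the $O(n)$ contribution $-n\int_a^b\log(z-x)\rho(x)\,\dif x$ from $\prod_j 1/(z-\mathsf{f}_j)$, the effective action along the contour is an explicit $S(z)$. The saddle equation $S'(z_0)=0$ reduces to an algebraic equation equating the logarithmic derivative of ${}_1F_1$ with the Stieltjes transform of $\rho$ on $[a,b]$; elementary manipulation yields the explicit $z_0$ in \eqref{eq:exprz0F}. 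The branch prescription of Remark \ref{rem:smodelC} corresponds to selecting the saddle through which the contour can be deformed in the steepest-descent sense. Comparing numerator and denominator at this saddle then produces exactly \eqref{eq:muValF}, with the factor $\sqrt{(z_0-a)(z_0-b)}/(z_0-x)-1$ capturing the first-order shift of the equilibrium density induced by the external charge at $z_0$ relative to the unperturbed Jacobi equilibrium.

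The main obstacle I anticipate is the saddle-point analysis. Two candidate saddle points arise from the quadratic saddle equation, and the physically relevant one must be selected by a steepest-descent argument whose validity depends on the position of $z_0$ relative to $[a,b]$, equivalently on whether $\nu$ exceeds the phase-transition threshold implicit in \eqref{eq:exprz0F}. Care is also needed to isolate the $O(1)$ mean correction cleanly from subleading terms in the expansions of ${}_1F_1$ and of $\log Z_n(t,z)$, and to justify interchange of the contour integral with the large-$n$ limit. By contrast, the variance computation parallels the non-spiked Jacobi case of \cite{Chen} and should be comparatively routine.
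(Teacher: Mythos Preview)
Your proposal is correct and follows essentially the same route as the paper: the paper also expresses the MGF via the contour integral of Lemma~\ref{th:1F1}, applies the Coulomb Fluid approximation of \cite{Chen} to the inner Jacobi-type integral $Z_n(\lambda,z)$ to extract $\mu_\F$, $\sigma_\F^2$, and the $z$-dependent pieces, replaces ${}_1F_1$ by its large-$n$ Laplace-type asymptotic (derived as a separate lemma via an integral representation), and then performs a saddle-point evaluation of the outer contour integral to obtain $z_0$ and $\bar{\mu}_\F(z_0)$, with the branch choice of Remark~\ref{rem:smodelC} arising exactly from selecting the admissible saddle. The only cosmetic difference is that the paper works directly with the MGF rather than explicitly writing a numerator/denominator ratio, and it packages the ${}_1F_1$ asymptotic through an auxiliary function $t(z)$ coming from the inner saddle of the ${}_1F_1$ integral representation, so that the final saddle equation is not literally ``logarithmic derivative of ${}_1F_1$ equals Stieltjes transform'' but its asymptotic surrogate.
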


These results show that, for all three models, the asymptotic contribution coming from the spiked eigenvalue contributes to a $O(1)$ correction to the mean of the linear statistic. In the absence of a spike, it turns out that such $O(1)$ terms disappear, which is consistent with prior results in \cite{Chen,Chen2}. In particular, we have:  
\begin{corollary}\label{corollaryAB}
If for Model A, $\delta=0$, or for Model B, $\nu=0$, then (\ref{eq:ModABResult}) reduces to
\[\sum_{k=1}^n f \left ( \frac{x_k}{n} \right )\overset{\mathcal{L}}{\rightarrow} \mathcal{N}\left (n\mu, \sigma^2 \right ) \text{.}  \]  
\end{corollary}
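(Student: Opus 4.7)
The plan is to reduce the corollary to showing that the spike-dependent correction $\bar{\mu}(z_0)$ defined in \eqref{eq:muVal} vanishes in the limits $\delta \to 0$ (Model A) and $\nu \to 0$ (Model B). Since the leading $n\mu$ term and variance $\sigma^2$ in Theorem \ref{th:wishart} are independent of the spike, the claim follows at once from that vanishing.

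First I would read off from \eqref{eq:exprz0} that the saddle point blows up as the spike disappears:
\begin{align*}
z_0 \;=\; \frac{(1+c\delta)(1+\delta)}{\delta} \;=\; \frac{1}{\delta} + O(1) \;\xrightarrow[\delta\to 0]{}\;+\infty
\end{align*}
for Model A, and analogously $z_0 = c/\nu + O(1) \to +\infty$ for Model B. So it suffices to prove $\lim_{z_0\to +\infty}\bar{\mu}(z_0)=0$, where the branch of $\sqrt{(z_0-a)(z_0-b)}$ is fixed by Remarks \ref{rem:smodelA} and \ref{rem:smodelB}; for large real $z_0>b$ these prescriptions pick the positive branch.

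Next, for $z_0>b$ large, I would expand
\begin{align*}
\sqrt{(z_0-a)(z_0-b)} \;=\; z_0\Bigl(1-\tfrac{a+b}{2z_0} + O(z_0^{-2})\Bigr) \;=\; z_0 - \frac{a+b}{2} + O(z_0^{-1}),
\end{align*}
and $1/(z_0-x) = z_0^{-1}(1 + x/z_0 + O(z_0^{-2}))$ uniformly in $x\in[a,b]$. Multiplying these expansions and subtracting $1$ gives
\begin{align*}
\frac{\sqrt{(z_0-a)(z_0-b)}}{z_0-x} - 1 \;=\; \frac{1}{z_0}\Bigl(x-\frac{a+b}{2}\Bigr) + O(z_0^{-2})
\end{align*}
uniformly for $x\in[a,b]$. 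Since $f$ is continuous on $[a,b]$ and the Chebyshev-type weight $1/\sqrt{(b-x)(x-a)}$ is integrable on $[a,b]$, substituting this expansion into \eqref{eq:muVal} gives $\bar{\mu}(z_0) = O(z_0^{-1}) \to 0$.

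The only delicate point is fixing the branch of the square root: one must check that Remarks \ref{rem:smodelA} and \ref{rem:smodelB} select the branch that is asymptotic to $+z_0$ (rather than $-z_0$) as $z_0 \to +\infty$ along the real axis, for otherwise the subtraction of $1$ in \eqref{eq:muVal} would not produce cancellation. Apart from this branch bookkeeping, the argument is a one-line asymptotic expansion of the integrand inside \eqref{eq:muVal}, and the corollary follows.
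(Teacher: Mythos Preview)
Your argument is correct. Both your proof and the paper's target the same object---the bracketed factor
\[
I(x,z_0)=\frac{\sqrt{(z_0-a)(z_0-b)}}{z_0-x}-1
\]
inside \eqref{eq:muVal}---but the executions differ. The paper substitutes the explicit closed forms for $z_0$ and for $\sqrt{(z_0-a)(z_0-b)}$ from \eqref{eq:exprz0} and \eqref{eq:racinespike} (resp.\ \eqref{eq:racineF}); after the common factor of $\delta$ (resp.\ $\nu$) cancels between numerator and denominator, setting $\delta=0$ (resp.\ $\nu=0$) gives $I(x,z_0)\equiv 0$ identically in $x$, so $\bar\mu(z_0)=0$ with no limiting argument needed. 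You instead observe that $z_0\to+\infty$ and perform a uniform asymptotic expansion of $I(x,z_0)=O(z_0^{-1})$, concluding $\bar\mu(z_0)\to 0$. Your route is slightly more robust (it does not rely on the algebraic identities \eqref{eq:racinespike}, \eqref{eq:racineF}) at the cost of the extra branch verification you flag; the paper's route is shorter because those identities already encode the correct branch and make the cancellation exact rather than asymptotic.
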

\begin{corollary}\label{corollaryC}
If for Model C, $\nu=0$, then (\ref{eq:ModCResult}) reduces to 
\[\sum_{k=1}^n f \left ( x_k \right )\overset{\mathcal{L}}{\rightarrow} \mathcal{N}\left (n\mu_\F, \sigma_\F^2 \right ) \text{.}  \]
\end{corollary}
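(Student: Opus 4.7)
The plan is to reduce the corollary to showing that the spike-dependent term $\bar{\mu}_\F(z_0)$ vanishes when $\nu = 0$. The terms $n\mu_\F$ in (\ref{eq:muF}) and $\sigma_\F^2$ in (\ref{eq:V1F}) depend only on $c_1$, $c_2$ and $f$, with no appearance of $\nu$, so the corollary would follow directly from Theorem \ref{th:F} once I establish $\bar{\mu}_\F(z_0)|_{\nu=0}=0$.

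From (\ref{eq:exprz0F}), as $\nu \to 0^+$ the numerator tends to $c_1>0$ while the denominator tends to $0^+$, so $z_0 \to +\infty$; hence the claim is equivalent to $\lim_{z_0\to+\infty}\bar{\mu}_\F(z_0)=0$. To evaluate this limit I would write $\sqrt{(z_0-a)(z_0-b)} = z_0\sqrt{(1-a/z_0)(1-b/z_0)}$ and fix the branch per Remark \ref{rem:smodelC} so that this behaves like $+z_0$ for large real $z_0$ (the natural choice, since the radicand is positive on the real axis outside $[a,b]$). This gives the uniform estimate
\begin{equation*}
\frac{\sqrt{(z_0-a)(z_0-b)}}{z_0-x} \;=\; 1 + O(z_0^{-1}),\qquad x\in[a,b].
\end{equation*}
Substituting into (\ref{eq:muValF}), the bracketed factor vanishes uniformly in $x$, while the prefactor $f(x/(1-x))/\sqrt{(b-x)(x-a)}$ is integrable on $[a,b]$ (with integrable endpoint singularities of inverse-square-root type and $f$ analytic, hence bounded, on a neighborhood of $[a,b]$). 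Dominated convergence then yields $\bar{\mu}_\F(z_0)\to 0$, and the conclusion follows.

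The only subtle point, and the main obstacle, is the correct identification of the branch of $\sqrt{(z_0-a)(z_0-b)}$: had Remark \ref{rem:smodelC} selected the opposite branch, the bracket in (\ref{eq:muValF}) would tend to $-2$ rather than $0$, and $\bar{\mu}_\F(z_0)$ would converge to $-\tfrac{1}{\pi}\int_a^b f(x/(1-x))/\sqrt{(b-x)(x-a)}\,\dif x$, which is nonzero in general. A clean cross-check bypassing this branch question is to go back to the density level: at $\nu=0$ we have $_1F_1(\cdot,\cdot,0)=1$, so the contour integral in (\ref{eq:densityF}) reduces, via standard residue evaluation and recombination with the squared Vandermonde, to the well-known joint eigenvalue density of the classical unspike F-matrix. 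The Coulomb-Fluid CLT of Chen and Lawrence \cite{Chen} applied to this density returns exactly $\mathcal{N}(n\mu_\F,\sigma_\F^2)$, confirming that the branch convention is indeed the one above.
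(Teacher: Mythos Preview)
Your proposal is correct and follows essentially the same route as the paper: both arguments reduce the corollary to showing that the bracketed factor $\sqrt{(z_0-a)(z_0-b)}/(z_0-x)-1$ in (\ref{eq:muValF}) vanishes when $\nu=0$. The paper does this more directly by substituting the closed form (\ref{eq:racineFmatrix}) for $\sqrt{(z_0-a)(z_0-b)}$ together with (\ref{eq:exprz0F}) for $z_0$, after which the factors of $\nu$ in the two denominators cancel and the ratio evaluates to $1$ at $\nu=0$; this bypasses both the limit argument and the separate branch verification you carry out, since the branch is already built into (\ref{eq:racineFmatrix}).
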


The proofs for all results in this section are given in Section \ref{sec:proofs}.

\section{Some Example Applications} \label{sec:applications}
In this section, to illustrate the utility of our main results, we consider a specific application of relevance for each of the three random matrix models.  These applications are quite different; each involving a different linear statistic.   For Model A, we will reproduce a known result, whilst for Models B and C we will present results which are new.  These are simply illustrative examples and our general results may apply to a much broader range of problems.

\subsection{Model A: Likelihood ratio test of $\mathbf{\Sigma}=\I_n$}

As an application of Theorem \ref{th:wishart} for Model A, we consider the classical LRT that the population covariance matrix is the identity, under a rank-one spiked population alternative. We will recover an existing result from \cite{Wang3} and \cite{Onatski}, which was derived by more complicated means.

Specifically, consider the $m$ samples $\y_1,\dots,\y_m$, drawn from a $n$-dimensional complex Gaussian distribution with covariance matrix $\mathbf{\Sigma}$. We aim to test the hypothesis:
\begin{align}
H_0:\,\mathbf{\Sigma}=\I_n \text{.}  \nonumber
\end{align}
This test has been studied extensively in classical settings (i.e., $n$ fixed, $m \rightarrow \infty$), first in detail in \cite{Mauchly}. Denoting the sample covariance by $\S_m=m^{-1} \sum_{k=1}^m \y_k\y_k^\dag$, the LRT is based on the linear statistic (see \cite[Chapter 10]{Anderson})
\begin{align*}
\text{L}=\text{tr}(\S_m)-\ln(\det \S_m)-1\text{.} 
\end{align*}
Under $H_0$, with $n$ fixed, as $m \rightarrow \infty$,  $m \text{L}$ is well known to follow a $\chi^2$ distribution. However, with high-dimensional data for which the dimension $n$ is large and comparable to the sample size $m$, the $\chi^2$ approximation is no longer valid (see \cite{Bai09}). In this case, a better approach is to use results based on the double-asymptotic given by Assumption \ref{assumptionAB}.  Such a study has been done first under $H_0$ and later under the spike alternative $H_1$.  More specifically, under $H_0$, this was presented in \cite{Bai09} using a CLT framework established in \cite{Bai04}.  Under $H_1$: ``$\mathbf{\Sigma}$ has a spiked covariance structure as in Model A'', this problem was addressed only very recently in the independent works\footnote{We point out that \cite{Wang3} (see also \cite{Onatski2}) considered a generalized problem which allowed for multiple spiked eigenvalues.}, \cite{Wang3} and \cite{Onatski}. The result in  \cite{Wang3} was again based on the CLT framework of \cite{Bai04}, with their derivation requiring the calculation of contour integrals.  The same result was presented in  \cite{Onatski}, in this case making use of sophisticated tools of contiguity and Le Cam's first and third lemmas \cite{Vandervaart}.

Here, we will adopt our general framework to recover the same result as \cite{Wang3} and \cite{Onatski} very efficiently, simply by calculating a few integrals. Under $H_1$, as before we denote by $1+\delta$ the spiked eigenvalue of $\mathbf{\Sigma}$.  Since $m\S_m\sim \C\mathcal{W}_n(m,\mathbf{\Sigma},\0_{n\times n})$, we now apply Theorem \ref{th:wishart} for the case of Model A to the function
\begin{align*}
f_{\text{L}}(x)=\frac{x}{c}-\ln\left(\frac{x}{c}\right)-1\text{.}
\end{align*}Let $x_k$, $1 \leq k \leq n$, be the eigenvalues of $m\S_m$. 
Since the domain of definition of $f_{\text{L}}$ is $(0,\infty)$, we assume that $c>1$ to ensure $a>0$ (see \eqref{eq:abWish}). Then, under Assumption \ref{assumptionAB},
\[\text{L}=\sum_{k=1}^n f_{\text{L}} \left ( \frac{x_k}{n} \right )\overset{\mathcal{L}}{\rightarrow} \mathcal{N}\left (n\mu_{\text{L}} + \bar{\mu}_{\text{L}} , \sigma_{\text{L}}^2 \right )\text{,}\]
where
\begin{align*}
\mu_{\text{L}}&=1+(c-1)\ln\left ( 1- c^{-1}\right )  \\
\sigma_{\text{L}}^2&=-c^{-1}-\ln\left ( 1- c^{-1}\right )  
\end{align*}
with the spike-dependent term
\begin{align*}
\bar{\mu}_{\text{L}} &=\delta-\ln(1+\delta)\text{.} 
\end{align*}
These results are in agreement with \cite{Wang3} and \cite{Onatski}.

The expression for $\mu_\text{L}$ was obtained by multiplying the numerator and the denominator of the integrand in \eqref{eq:mu} by $\sqrt{(b-x)(x-a)}$, applying a partial fraction decomposition, then integrating using the identities \eqref{eq:248}--\eqref{eq:251} along with \cite[Eq. 2.264, 3.]{Gradshteyn}, and finally replacing $a$ and $b$ by their respective values. The expression for $\sigma_\text{L}^2$ was obtained by using the integral identities \eqref{eq:267} and \eqref{eq:268}, along with those indicated above.  The spike-dependent term $\bar{\mu}_{\text{L}}$ was obtained by using \eqref{eq:248}--\eqref{eq:264} and \eqref{eq:G}, replacing $a$, $b$ and $z_0$ by their respective values, and taking into account \eqref{eq:racinespike}.

\subsection{Model B: Capacity of MIMO Communication Systems with Line-of-Sight (LoS)}

As an application of Theorem \ref{th:wishart} for Model B, we consider the capacity of multiple-antenna communication systems. In particular, consider a MIMO
wireless communication system with $n_t$ transmit and $n_r$ receive
antennas. The linear model relating the input (transmitted) signal
vector $\x$ of size $n_t$ and output (received) signal vector $\y$ of size $n_r$ takes the form
\[\y = \HH\x+\n\text{,}\]
where $\n$ is a complex Gaussian vector of size $n_r$, representing appropriately normalized receiver noise, with zero mean and covariance $\E(\n\n^\dag) = {\bf I}_{n_r}$. The $n_r \times n_t$ matrix $\HH$ represents the wireless fading
coefficients (i.e, the channel gains between each pair of transmit and receive antennas), and this is assumed to be known to the receiver but not to the transmitter. We consider a communication scenario in which there is a direct LoS path between the transmitter and the receiver, with rich scattering in the communication environment. Under these assumptions, $\HH$ is reasonably modeled as a complex Gaussian random matrix with independent entries and non-zero mean according to:
\begin{align*}
\HH = \sqrt{\frac{K}{K+1}}\M+\sqrt{\frac{1}{K+1}}\HH_w\text{,}
\end{align*}
where $\HH_w$ is an i.i.d. matrix with zero mean, unit variance complex Gaussian entries, $\M$ is deterministic and arbitrary, normalized such that $\text{tr}(\M\M^\dag)=n_r n_t$ and $K < \infty$ is the so-called ``Rician factor'' between the unfaded (deterministic) and faded (random) components. For consistency with Model B, we will assume that
\begin{align*}
K=K_0/\max(n_r,n_t)
\end{align*}
for fixed $K_0$.  Thus for large numbers of antennas, our results will formally apply for scenarios for which the Rician $K$ factor is not too strong. Furthermore, with a direct LoS path, we make the natural assumption that $\M^\dag \M$ is of rank one, so that its sole non-null eigenvalue is $n_r n_t$. In the absence of any information about ${\bf H}$, the transmitted signals are assumed to obey $\x \sim \C\mathcal{N} \left (\mathbf{0}_{n_t \times 1},\frac{P}{n_t}\I_{n_t}\right )$, where $P$ is the total transmit power which is assumed to be spread equally across all antennas. Note that since the noise variance is normalized to unity, $P$ also represents the signal to noise ratio (SNR).

In terms of performance evaluation, one of the most fundamental performance metrics is the ``outage probability'', which relates immediately to the distribution of the quantity 
\begin{align}
\text{C}= \ln \, \text{det}\left( \I_{n_r} +\frac{P}{n_t} \HH\HH^\dag \right)  \label{eq:mutual_info}
\end{align}
which is an information-theoretic quantity reflecting the mutual information between the transmitted and received signals.  This has been studied extensively for over a decade (see e.g., \cite{Chen2,Li,Foschini,Telatar,Chiani,Smith}) under various different assumptions; for example, assuming different distributions for the channel matrix.  

For MIMO communication systems with LoS, the asymptotic distribution of (\ref{eq:mutual_info}) for large numbers of antennas has been studied in \cite{Kammoun,Hachem07} via Steiltjes transform methods and in \cite{Moustakas05, TariccoIT1} via the replica method. Nevertheless, such results were not explicit: they were expressed in terms of solutions of fixed-point equations requiring numerical evaluation. Here, we will find an explicit expression for the asymptotic distribution of \eqref{eq:mutual_info} which, to the best of our knowledge, is new.  

Before presenting this expression, we also point out that various results have also been obtained for the mean and variance of the mutual information (\ref{eq:mutual_info}) for  finite numbers of antennas.  Such formulas are rather complicated; e.g., involving  determinants, confluent hypergeometric functions and Meijer-G functions \cite{Kang06}, infinite series of exponential integral functions \cite{AlfanoISITA}, or multi-dimensional integrals \cite{Jayaweera05}; or they are derived under bounds or alternative asymptotics such as high or low SNRs (see \cite{Hansen,McKay05, Cui05, McKay06, McKayISIT, Jin07} and references therein).

To place the linear statistic (\ref{eq:mutual_info}) in the context of our framework (in accordance with Model B), it is convenient to first
define $m=\max\{n_r,n_t\}$, $n=\min\{n_r,n_t\}$, and 
\begin{align*}
\W = \left \{ \begin{array}{rl}
               (K+1)\HH\HH^\dag\text{,} &n_r < n_t\\
               (K+1)\HH^\dag\HH\text{,} &n_r \ge n_t
              \end{array}\right.    \text{.}
\end{align*}
We see that $\W\sim \C\mathcal{W}_n(m,\mathbf{I}_n, \mathbf{\Theta} )$, with
\begin{align*}
\mathbf{\Theta} = \left \{ \begin{array}{rl}
               K  \M \M^\dag \text{,} &n_r < n_t\\
               K  \M^\dag \M \text{,} &n_r \ge n_t
              \end{array}\right.   
\end{align*}
having the sole non-null eigenvalue $Knm=K_0n$.  Thus, in accordance with Model B, we set $\nu=K_0$.  We will apply Theorem \ref{th:wishart} for the case of Model B with the function
\begin{align*}
f_{\text{C}}(x)=\ln \left ( 1+ \frac{x}{T}\right )\text{,}
\end{align*}
where we have defined
\begin{align*}
T=\frac{n_t}{nP}\left(\frac{K_0}{m}+1\right) \text{.}
\end{align*} 
Let $x_k$, $1 \leq k \leq n$, be the eigenvalues of $\W$. Then, under Assumption 1, we obtain
\begin{align}
\text{C}= \sum_{k=1}^n f_{\text{C}} \left (\frac{x_k}{n} \right )\overset{\mathcal{L}}{\rightarrow} \mathcal{N}\left (n\mu_{\text{C}} + \bar{\mu}_{\text{C}}, \sigma^2_{\text{C}} \right )\label{eq:limitcap}
\end{align}
where
\begin{align*}
\mu_{\text{C}} &=\frac{1}{2} \left [ (a+b) \ln \left ( \frac{\sqrt{T+a}+\sqrt{T+b}}{2} \right )
- \frac{(\sqrt{T+a}-\sqrt{T+b})^2}{2} \right.\\&
\hspace*{1cm} - \left. \sqrt{ab} \ln \left ( \frac{(\sqrt{ab}+\sqrt{(T+a)(T+b)})^2-T^2}{(\sqrt{a}+\sqrt{b})^2}\right )-2\ln(T)\right ] \\
\sigma^2_{\text{C}} &=2\ln \left ( \frac{1}{2} \left ( \frac{T+a}{T+b}\right )^{\frac{1}{4}} + \frac{1}{2} \left ( \frac{T+b}{T+a}\right )^{\frac{1}{4}}\right )
\end{align*}
with
\begin{align*}
\bar{\mu}_{\text{C}}   &= \frac{1}{2}\ln \left (\frac{2(T\nu+(1+\nu)(c+\nu))^2}{\nu^2(1+c+T)A+2c\nu(1+c+T+A)+\nu^2(T^2+2T(1+c)+1+c^2)+2c^2}\right) \text{.}
\end{align*}
Here, $a$ and $b$ are defined as in (\ref{eq:abWish}), and $A=\sqrt{(T+a)(T+b)}$.

The expressions for $\mu_{\text{C}}$ and $\sigma_{\text{C}}^2$ were calculated in \cite{Chen2,Kasakopoulos,Moustakas03,Tulino2,Hachem08} (which considered the mutual information distribution for zero-mean ${\bf H}$), whilst $\bar{\mu}_{\text{C}}$ was obtained using \eqref{eq:248}, \eqref{eq:pvnull}, and \eqref{eq:G}, replacing $a$, $b$ and $z_0$ by their respective values and taking into account \eqref{eq:racineF}.

Figure \ref{fig:figure1} plots the density of the normalized mutual information $\text{C}/n$ for various $n$ and $K$. The crosses represent the simulated PDF and the solid curve is a Gaussian distribution with mean $\mu_{\text{C}} +\bar{\mu}_{\text{C}}(z_0)/n$ and variance $\sigma_{\text{C}}^2/n^2$ \eqref{eq:limitcap}. The close fit of our Gaussian approximation is evident in all cases, even when $n$ is not large.
\begin{figure}[!h]
\centering
\subfloat[$m=2n$, $c=2$, $K_0=5$.]{%
\includegraphics[scale=0.61]{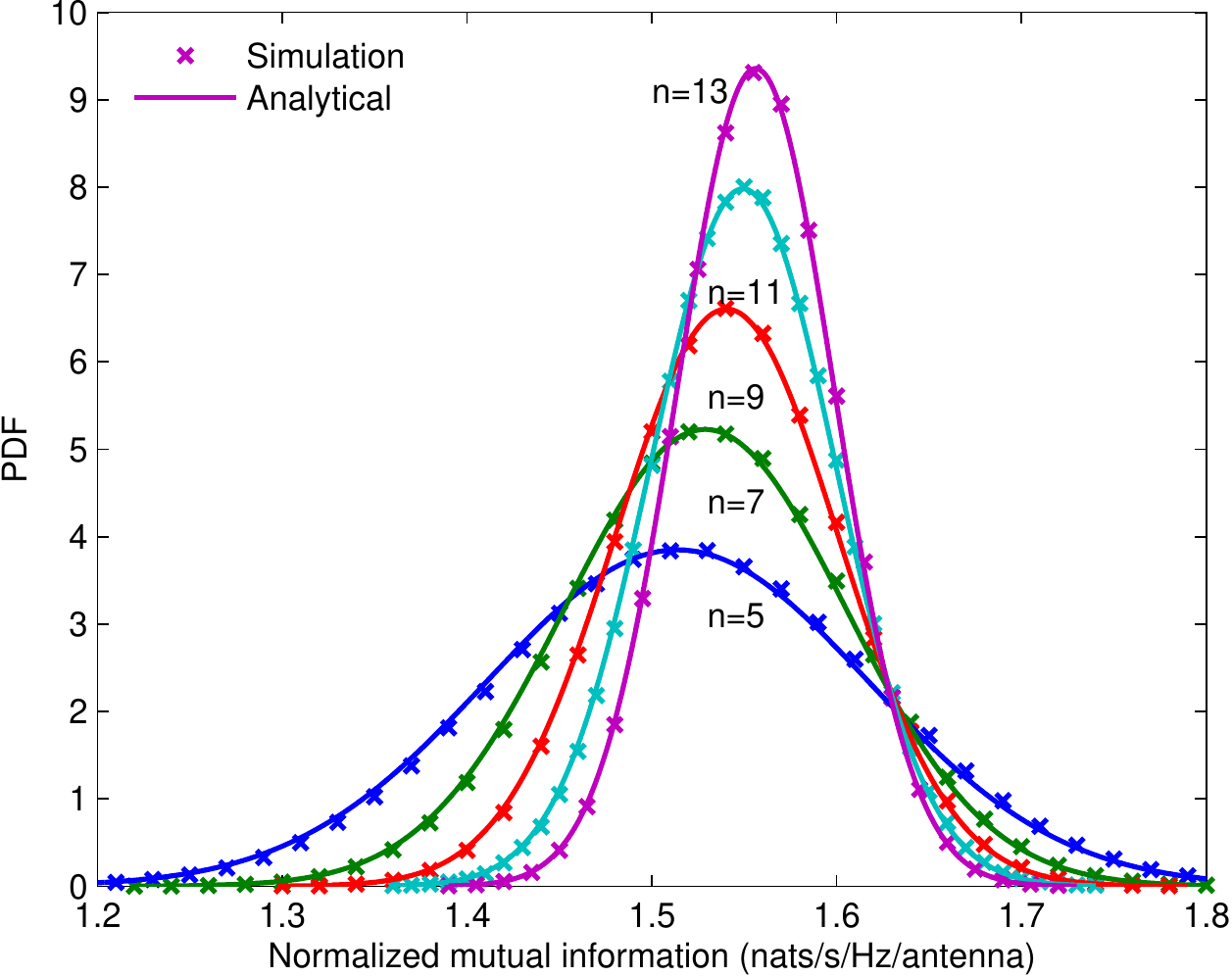}
\label{fig:subfigure1}}
\quad
\subfloat[$m=32$, $n=16$, $c=2$.]{%
\includegraphics[scale=0.61]{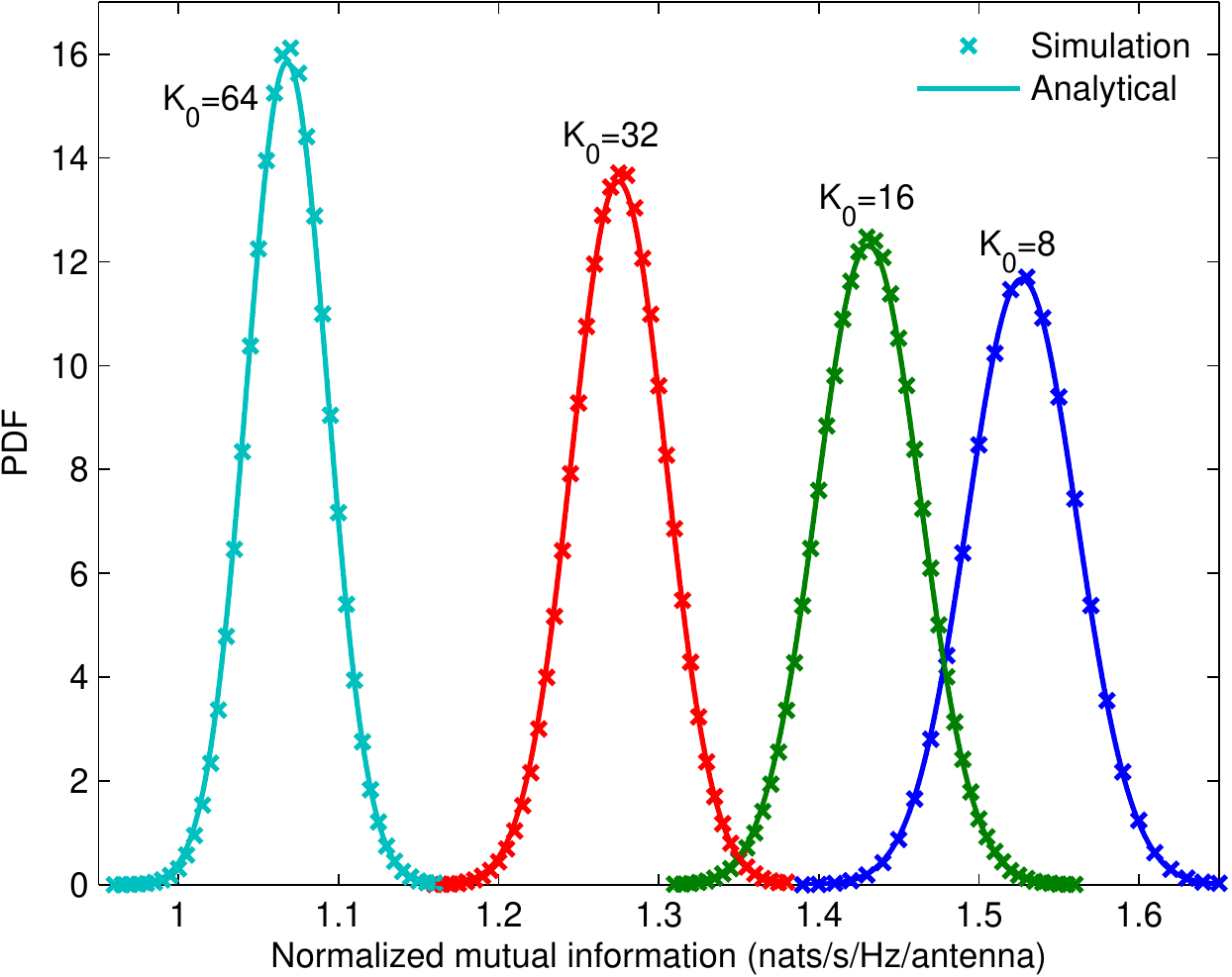}
\label{fig:subfigure2}}
\caption{\footnotesize PDF of $\text{C}/n$. Results shown for\protect\footnotemark ~$P=5$ dB.}
\label{fig:figure1}
\end{figure}

\subsubsection{High SNR Behavior} For practical channels with sufficient dynamics (e.g., high mobility), one is often interested in the expected mutual information $\E(\text{C})$, rather than the entire distribution.  For this quantity, when the SNR $P$ is large our result yields
\footnotetext{Note that $x\text{ dB}=10\log_{10}x$. These are the typical units used for expressing SNR in wireless communication systems.}
\begin{align}
n\mu_\text{C} \underset{P \rightarrow \infty}{\sim} n\left [ \ln(P) - \mathcal{L}_\infty(K_0) \right ]  +o_P(1)\label{eq:approxoffset}
\end{align}
where
\begin{align*}
\mathcal{L}_\infty(K_0) = 1 + (c-1)\ln \left ( \frac{c-1}{c}\right )+\ln\left(\frac{K_0}{m}+1\right) - \frac{1}{n} \ln \left (1+c^{-1}K_0 \right )
\end{align*}
is the so-called ``power offset'', and $o_P(1) \rightarrow 0$ when $P\rightarrow \infty$ . In \cite{Lozano}, the authors consider an analogous scenario, but with a fixed Rician factor $K$. They give a finite $n$, $m$ formula for the power offset $\mathcal{L}_\infty$. For comparison purpose, we set $K=K_0/m$ in their result, which gives
\begin{align}
n\mu_\text{C} \underset{P \rightarrow \infty}{\sim} n\left [ \ln(P) - \mathcal{L}_\infty(K_0) \right ]  +o_P(1)\label{eq:approxoffset2}
\end{align}
where
\begin{align*}
\mathcal{L}_\infty(K_0) = 1 + (c-1)\ln \left ( \frac{c-1}{c}\right )+ \ln\left(\frac{K_0}{m}+1\right) - \frac{K_0}{m\ln(2)}{}_2F_2(1,1;2,m+1;-nK_0)\text{.}
\end{align*}
To leading order in $n$, the two results match: the difference is only in the $O(1)$ term. Our logarithm term serves as an approximation of the ${}_2F_2$ term. The accuracy of our approximation is demonstrated in Figure \ref{fig:figure2}. Here, the curve ``Approx. (log term)'' corresponds to (\ref{eq:approxoffset}), whilst the curve ``Approx. (${}_2F_2$)'' corresponds to the result \eqref{eq:approxoffset2} from \cite{Lozano}.  Despite the simplicity of our formula (\ref{eq:approxoffset}), we see that it is very accurate, even for small values of $m$ and $n$.  As a further point of reference, the exact expected value of $\text{C}$ in (\ref{eq:mutual_info}) is also shown, where this was obtained by numerical simulation.

\begin{figure}[ht]
\centering
\subfloat[$m=6$, $n=4$, $K_0=30$ ($K=5$).]{%
\includegraphics[scale=0.60]{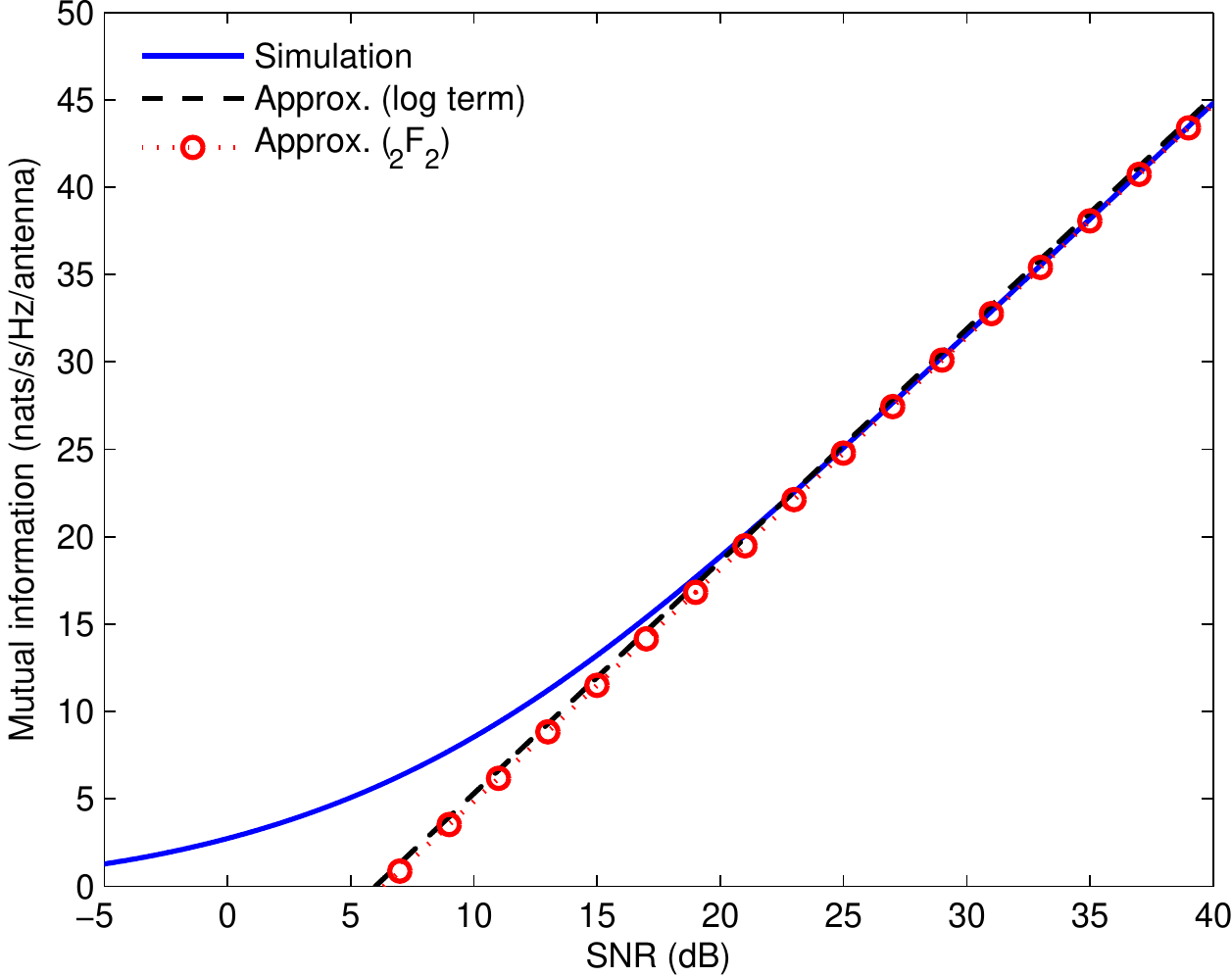}
\label{fig:subfigure1}}
\quad
\subfloat[$m=32$, $n=16$, $K_0=90$ ($K=5$).]{%
\includegraphics[scale=0.60]{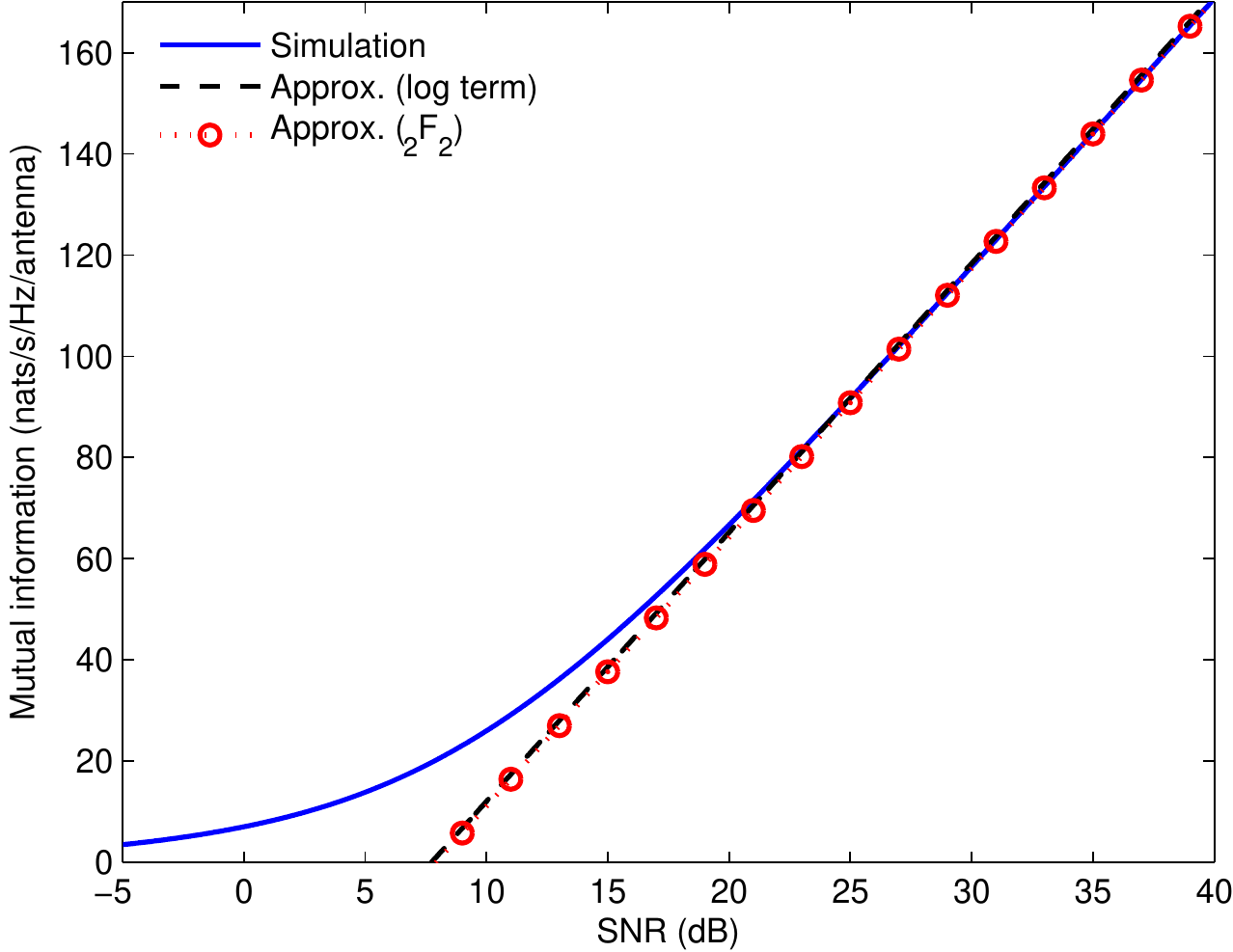}
\label{fig:subfigure2}}
\caption{\footnotesize Plot of $\E(\text{C})$ as a function of the SNR, with three high-SNR approximations.}
\label{fig:figure2}
\end{figure}

\subsection{Model C: High-Dimensional Multiple Sample Significance Test}

As an application of Theorem \ref{th:F} for Model C, we consider the multiple significance test problem. In particular, consider $q$ Gaussian populations $\C\mathcal{N}(\mathbf{u}^{(j)},\mathbf{\Sigma})$ of dimension $n$, with $1\le j \le q$ and for each population, we assume that we have a sample of size $p_j$: $\y_1^{(j)},\dots,\y_{p_j}^{(j)}$. We aim to test the hypothesis 
\begin{align*}
H_0:\,\mathbf{u}^{(1)}=\cdots=\mathbf{u}^{(q)}=\mathbf{0}_{n\times 1}\text{.}
\end{align*}
This test has been studied in classical settings (i.e., $n$ fixed, $p_j \rightarrow \infty$), see \cite{Anderson}. Define $p=\sum_{j=1}^q p_j$. The likelihood ratio statistic can be written as
\begin{align}
\L=\det \left ( \I_n +  \mathbf{F}\right )^{-1}\text{,} \label{eq:wilkstat}
\end{align}
with
\begin{align} 
\mathbf{F} &=\hat{\mathbf{\Sigma}}^{-1}\hat{\B}\A\hat{\B}^\dag \label{eq:defF}\text{,}\\
\hat{\B} &= \left(\bar{\y}^{(1)}-\bar{\y}^{(q)},\dots,\bar{\y}^{(q-1)}-\bar{\y}^{(q)}\right)\nonumber\\
\hat{\mathbf{\Sigma}}&=\sum_{j,k}(\y_k^{(j)}-\bar{\y}_k^{(j)})(\y_k^{(j)}-\bar{\y}_k^{(j)})^\dag\text{,}\nonumber
\end{align}
where $\bar{\y}^{(j)}=p_j^{-1}\sum_{k=1}^{p_j} \y_k^{(j)}$ is the empirical mean of each population, and
\begin{align*}
\mathbf{A}=\begin{pmatrix}
  \frac{p_1(p-p_1)}{p} & -\frac{p_1 p_2}{p} & \cdots & -\frac{p_1 p_{q-1}}{p} \\
  -\frac{p_1 p_2}{p} & \frac{p_2(p-p_2)}{p} & \cdots & -\frac{p_2 p_{q-1}}{p} \\
  \vdots  & \vdots  & \ddots & \vdots  \\
  -\frac{p_1 p_{q-1}}{p} & -\frac{p_2 p_{q-1}}{p} & \cdots & \frac{p_{q-1}(p-p_{q-1})}{p}
 \end{pmatrix}\text{.}
\end{align*}
As seen from \cite{Anderson}, $\hat{\mathbf{\Sigma}}$ and $\hat{\B}\A\hat{\B}^\dag$ are independent, and
\begin{align}
\hat{\mathbf{\Sigma}} \sim \C\mathcal{W}_n(p-q,\mathbf{\Sigma},\0_{(p-q)\times (p-q)})\text{,}\quad\quad \hat{\B}\A\hat{\B}^\dag \sim \C\mathcal{W}_n(q-1,\mathbf{\Sigma},\mathbf{\Sigma}^{-1}\B\A\B^\dag)\text{,}  \label{eq:WishEq}
\end{align}
where
\begin{align*}
\B = \left(\mathbf{u}^{(1)}-\mathbf{u}^{(q)},\dots,\mathbf{u}^{(q-1)}-\mathbf{u}^{(q)}\right)\text{.}
\end{align*}
Under $H_0$, with $n$ fixed, as $p \rightarrow \infty$, $-p\ln \L$ is well-known to follow a $\chi^2$ distribution. However, with high-dimensional data for which the dimension $n$ of the data is large and comparable to the sample size $p$, the $\chi^2$ approximation is no longer valid (see \cite{Bai13}). In this case, a better approach is to use results based on the double asymptotic given by Assumption \ref{assumptionC} with $m_1=q-1>n$ and $m_2=p-q$. Such a study has been done under $H_0$ in \cite{Bai13}. More specifically, this was presented as a special case of a more general linear hypothesis test in regression analysis, using a CLT framework established in \cite{Zheng}. 

Compared with $H_0$, under alternative hypotheses, less is known.  Here, we will find the distribution of $-\ln \L$ under the specific alternative
\begin{align}
H_1:\,\mathbf{u}^{(1)}\ne 0 \text{ and }\mathbf{u}^{(2)}=\cdots=\mathbf{u}^{(q)}=\mathbf{0}_{n\times 1}\text{.}\label{eq:testreg}
\end{align}
Whilst this scenario has been considered previously under classical settings (see e.g., \cite{Schatzoff,Olson}), with high-dimensional data and under the double asymptotic given by Assumption \ref{assumptionC}, it has not. Thus, the result which we present in the following is new. This result will permit the calculation of the asymptotic power of this test under the alternative \eqref{eq:testreg}. 

Denote by $u_1,\dots,u_n$ the elements of the vector $\mathbf{u}^{(1)}$. Under $H_1$ above, we find after some calculations that the non-centrality matrix $\mathbf{\Sigma}^{-1}\B\A\B^\dag$ in (\ref{eq:WishEq}) has only one non-null eigenvalue, given by $\nu=\xi(p_1-p_1^2/p)\sum_{k=1}^n u_k^2$, where $\xi$ is the top-left entry of $\mathbf{\Sigma}^{-1}$. We set $m_1=q-1$, $m_2=p-q$ and assume that $q>n+1$. For consistency with Model B, we will assume that $p_1$ is fixed, and either $\sum_{k=1}^n u_k^2=K_1n$ and $\xi=K_2$, or $\sum_{k=1}^n u_k^2=K_1$ and $\xi=K_2n$, where $K_1$, $K_2>0$. Under these conditions, $\nu$ is constant under Assumption \ref{assumptionC}. Thus, $\mathbf{F}$ in (\ref{eq:defF}) conforms to Model C. We can derive an explicit asymptotic characterization of the statistic $-\ln \L$ \eqref{eq:wilkstat} by applying Theorem \ref{th:F} with the function
\[f_{\text{R}}(x)=\ln(1+x)\text{.}\]
Let $x_k$, $1 \leq k \leq n$, be the eigenvalues of ${\bf F}$.  Then, under Assumption \ref{assumptionC},
\begin{align}
-\ln \L=\sum_{k=1}^n f_\text{R} \left (x_k \right )\overset{\mathcal{L}}{\rightarrow} \mathcal{N}\left (n\mu_{\text{R}} + \bar{\mu}_{\text{R}}, \sigma_{\text{R}}^2 \right )\text{,}\label{eq:CLRTlaw}
\end{align}
where
\begin{align*}
\mu_{\text{R}}=&-(c_1+c_2) \bigg [ \ln \left (\frac{\sqrt{1-a}+\sqrt{1-b}}{2} \right ) - \frac{\sqrt{ab}}{2}\ln \left (\frac{1-(\sqrt{ab}-\sqrt{(1-a)(1-b)})^2}{(\sqrt{a}+\sqrt{b})^2} \right ) \\
&+ \sqrt{(1-a)(1-b)}\ln \left (\frac{1}{2\sqrt{1-a}}+\frac{1}{2\sqrt{1-b}} \right ) \bigg ]   \\
\sigma_{\text{R}}^2 =& \ln \left ( \frac{(\sqrt{1-a}+\sqrt{1-b})^2}{4\sqrt{(1-a)(1-b)}}\right )  
\end{align*}
with 
\begin{align}
\bar{\mu}_{\text{R}} =  \ln \left ( 1+ \frac{\nu}{c_1+c_2}\right ) \text{.}   \label{eq:muValFreg}
\end{align}
Here, $a$ and $b$ are defined as in (\ref{eq:abFMat}).

Note that the expressions for $n\mu_{\text{R}}$ and $\sigma_{\text{R}}^2$ are in agreement with previous results for the case of $H_0$ considered in \cite{Bai13}, for which $\mathbf{u}^{(1)}$ is zero.  Thus, the key difference under the $H_1$ scenario considered here is the non-zero term (\ref{eq:muValFreg}), which serves as a perturbation to the asymptotic mean of the statistic.

In deriving the above results, the expression for $\mu_{\text{R}}$ was obtained by multiplying the numerator and the denominator of the integrand in \eqref{eq:muF} by $\sqrt{(b-x)(x-a)}$, applying a partial fraction decomposition,  and integrating using the identities \eqref{eq:253}--\eqref{eq:255}. The expression for $\sigma_{\text{R}}^2$ was obtained using \eqref{eq:253}, \eqref{eq:255} and \eqref{eq:269}. The term $\bar{\mu}_{\text{R}}$ was evaluated using \eqref{eq:253} and \eqref{eq:H}, replacing $a$, $b$ and $z_0$ by their respective values, and taking into account \eqref{eq:racineFmatrix}. 

Figure \ref{fig:subfigure3a} plots the density of the linear spectral statistic $-\ln \L$ for various $n$. The crosses represent the simulated PDF and the solid curve is a Gaussian distribution with mean $n\mu_{\text{R}} +\bar{\mu}_{\text{R}}$ and variance $\sigma_{\text{R}}^2$ \eqref{eq:CLRTlaw}. The close fit of our Gaussian approximation is evident in all cases, even when $n$ is not large.

Based on the above results, we may also compute the asymptotic power of the statistical test, which represents the probability that we reject $H_0$ when under $H_1$. From standard hypothesis testing theory (see e.g. \cite{Walpole}), using\footnote{Note that under $H_0$, $\bar{\mu}_{\text{R}} = 0$.}  
\eqref{eq:CLRTlaw}, we get
\begin{align}
\beta(\alpha,\nu,c_1,c_2)=1-\Phi \left (\Phi^{-1}(1-\alpha) - \frac{1}{\sigma_{\text{R}}} \ln \left ( 1+ \frac{\nu}{c_1+c_2}\right )  \right )\text{,}  \label{eq:Power}
\end{align}
where $\alpha$ is a parameter specifying the nominal level of the test, i.e., reflecting the probability of rejecting $H_0$ under $H_0$, whilst $\Phi$ is the cumulative distribution function of a standard Gaussian and $\Phi^{-1}$ its inverse.  The result is shown in Figure \ref{fig:subfigure3b} for a nominal level $\alpha=0.05$. Results are shown for $q=21$ populations, aggregate sample size of $p=51$, with the first population having sample size $p_1=5$. The population covariance matrix is $\mathbf{\Sigma}=\I_n$.  The power is plotted as a function of the squared norm $\sum_{k=1}^n u_k^2$.   Simulations are also shown for further comparison.  Note that for the simulation results, the same statistical test is assumed as indicated above\footnote{That is, when performing the test, the same decision threshold was chosen, as based on the asymptotic Gaussian distribution under $H_0$.}, but now the exact power of this test is computed via Monte Carlo simulations. The close fit of our power approximation (\ref{eq:Power}) is evident.
  
\begin{figure}[ht]
\centering
\subfloat[$p=1+2n$, $q=1+5n$, $p_1=1$, $\sum_{k=1}^n u_k^2=n$.]{%
\includegraphics[scale=0.61]{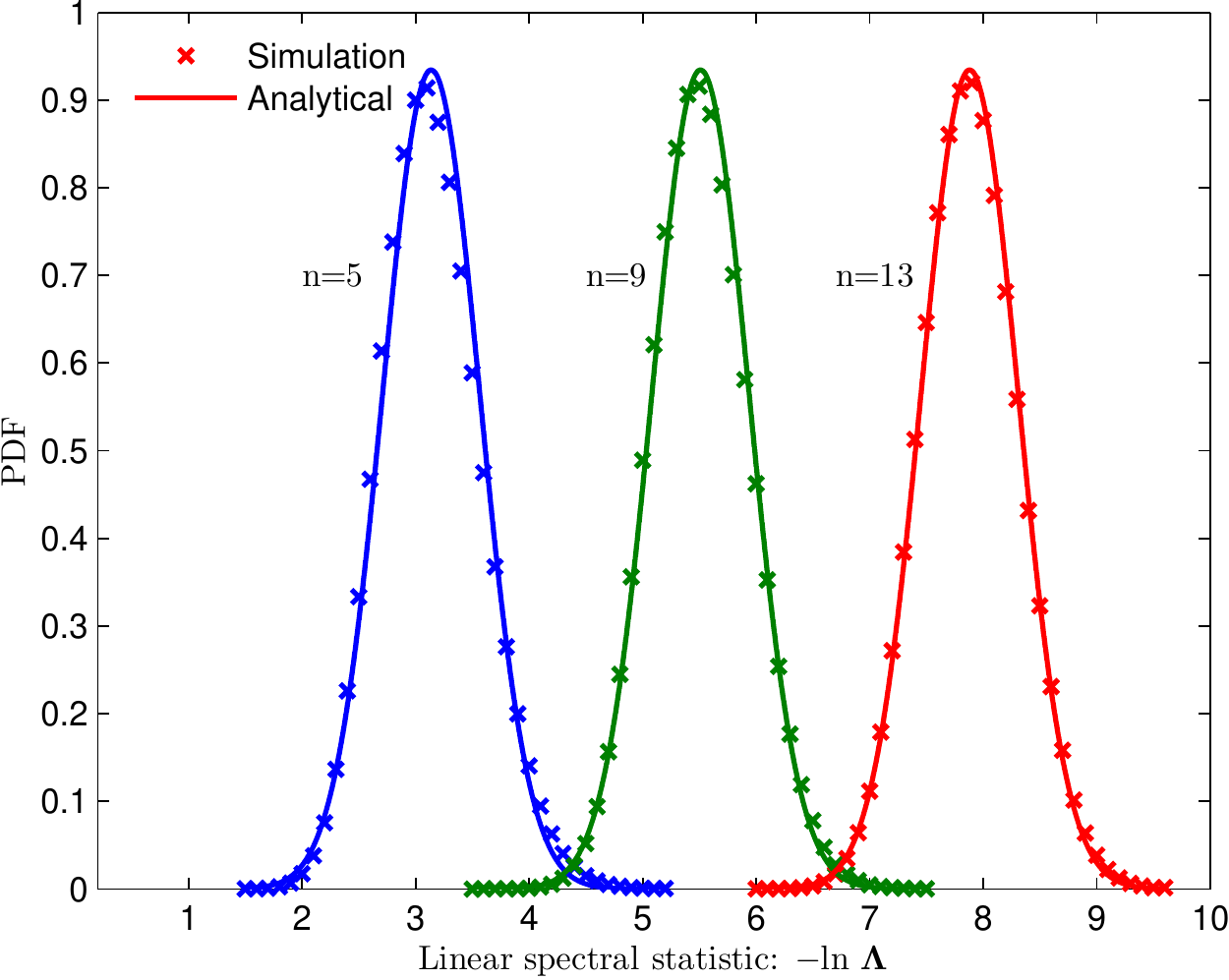}
\label{fig:subfigure3a}}
\quad
\subfloat[$p~=~51$, $q=21$, $p_1=5$, $n=10$.]{%
\includegraphics[scale=0.61]{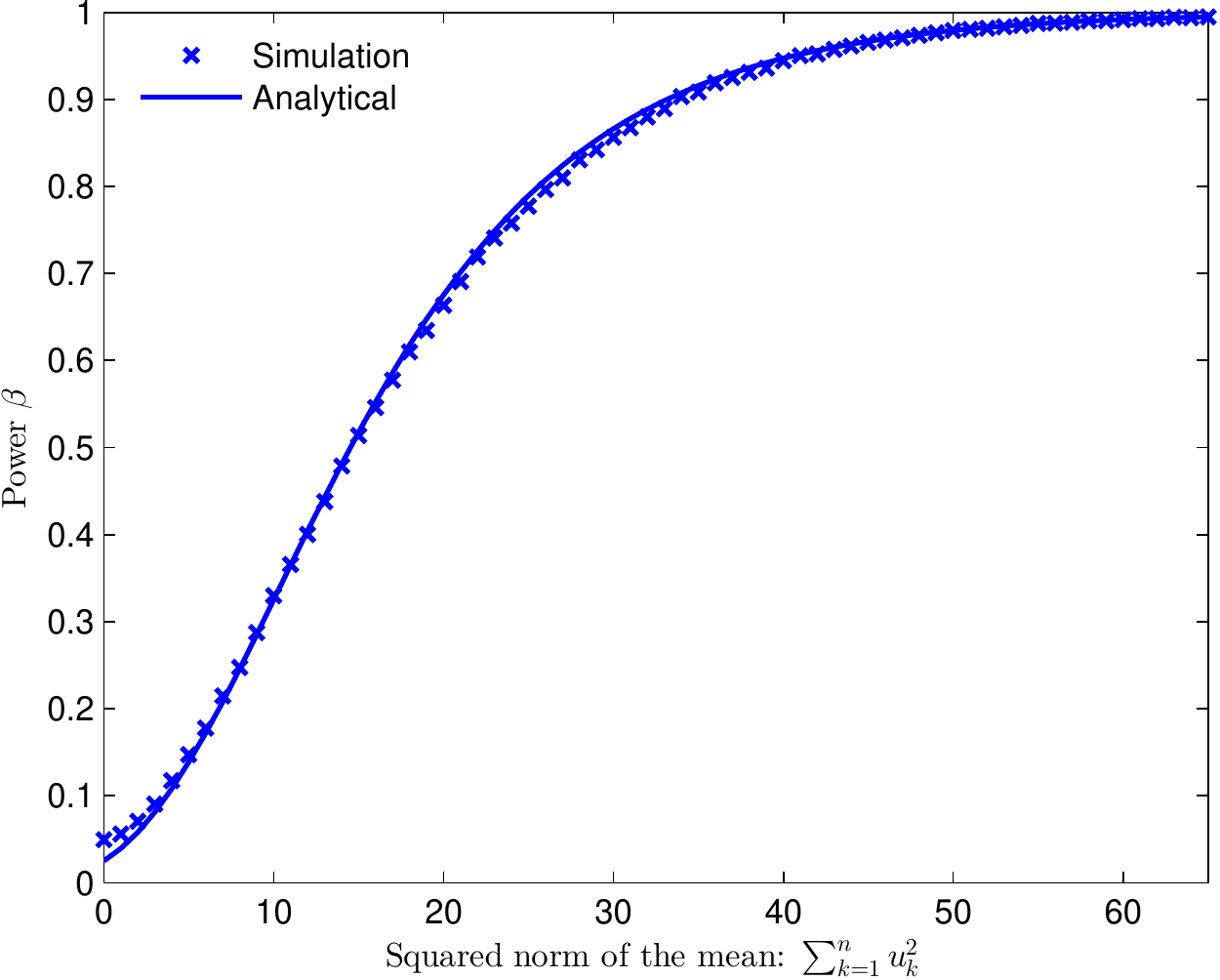}
\label{fig:subfigure3b}}
\caption{(a) \footnotesize PDF of $-\ln \L$ and (b) power function.}
\label{fig:figure3}
\end{figure}

\section{Derivations of Main Results}\label{sec:proofs}

This section compiles the proofs of the key technical results in the paper.

\subsection{Proof of Lemma \ref{th:1F1}} \label{sec:proof_th_pdf}

We adapt the derivation of \cite{Wang,Dharmawansa}, which derived the joint eigenvalue density (\ref{eq:density}) for Model B. The joint eigenvalue density for Model C admits \cite{James}
\begin{align}
p_\text{F}(x_1,\dots,x_n)=K_{n,m_1,m_2}e^{-\text{tr}(\M\M^\dag)} \,_1\tilde{F}_1 \left( m_1+m_2;\,m_1;\, {\bf \Omega},\, {\bf \Phi} \right) \nonumber\\
\times\prod_{j=1}^n \frac{x_j^{m_1-n}}{(1+x_j)^{m_1+m_2}} \prod_{1\le j < k \le n} (x_k-x_j)^2\, \label{eq:eigdis}
\end{align}
where $K_{n,m_1,m_2}$ is a constant, $\mathbf{\Phi}~=~\text{diag}(\mathsf{f}_1,\dots,\mathsf{f}_n)$, $\mathsf{f}_j=x_j/(1+x_j)$, $\mathbf{\Omega}=\text{diag}(\omega_1,\dots,\omega_n)$, $\omega_j$ is the $j$th eigenvalue of $\mathbf{\Theta}$ and $_1\tilde{F}_1(\cdot;\cdot;\cdot,\cdot)$ denotes the confluent hypergeometric function of two matrix arguments. 

The crux of the proof lies in an alternative contour-integral representation which we present for the $_1\tilde{F}_1$ function under the spiked model.  To this end, we start with the traditional expansion \cite{James,Takemura},
\begin{align} \label{eq:1F1}
\,_1\tilde{F}_1 \left (m_1+m_2;\,m_1;\, {\bf \Omega},\, {\bf \Phi} \right ) = \sum_{k=0}^\infty \frac{1}{k!}\sum_{\kappa} \frac{[m_1 + m_2]_\kappa}{[m_1]_\kappa}\frac{C_\kappa( {\bf \Omega} ) C_\kappa( {\bf \Phi} ) }{C_\kappa(\mathbf{I}_n)}\text{,}
\end{align}
where $C_\kappa(\cdot)$ is a complex zonal polynomial, whilst $\kappa=(k_1,\dots,k_n)$ with $k_j \in \mathbb{N}$ is a partition of $k$ such that $k_1 \ge \cdots \ge k_n \ge 0$ and $\sum_{j=1}^n k_j=k$. Moreover, $[\ell]_\kappa = \prod_{j=1}^\ell (\ell-j+1)_{k_j}$, where $(\ell)_k=\Gamma(\ell+k)/\Gamma(\ell)$ denotes the Pochhammer symbol. 
In our case, the only non-null eigenvalue of $\mathbf{\Omega}$ is the spiked eigenvalue $n\nu$. Thus, from the definition of $C_\kappa( \cdot )$ (see \cite{James,Takemura}), it follows that $C_\kappa(\mathbf{\Omega} )=0$ for all partitions of $k$ having more than one non-zero part. Therefore, only partitions of the form $(k,0,\dots,0)$, which we denote by $k$, contribute to the summation. Furthermore, $C_k(\I_n)=\prod_{j=0}^{k-1} \frac{n+j}{1+j}$. Consequently, (\ref{eq:1F1}) reduces to
\begin{align}
_1\tilde{F}_1 \left (m_1+m_2;\,m_1;\,\mathbf{\Omega},\,\mathbf{\Phi}\right ) = \sum_{k=0}^\infty \frac{1}{k!} \frac{(m_1+m_2)_k}{(m_1)_k}\left ( \prod_{j=0}^{k-1}\frac{1+j}{n+j} \right )C_k(\mathbf{\Phi})(n\nu)^k \label{eq:1F1sim}
\end{align}
which is seen as a power series expansion in $\nu$.  Following \cite{Wang}, and recalling the definition of $\mathbf{\Phi}$ above, we also have
\begin{align*}
\frac{1}{k!}\left (\prod_{j=0}^{k-1} 1+j\right ) C_k(\mathbf{\Phi}) = \frac{1}{2\pi\imath} \oint_0 \left( \prod_{j=1}^{n} \frac{1}{1-z\mathsf{f}_j} \right) \frac{\dif z}{z^{k+1}}
\end{align*}
where the contour is taken to be a small circle around 0 taken counter-clockwise with $1/\mathsf{f}_j$, $1\le j \le n$ being exterior to the contour. Using this result in \eqref{eq:1F1sim}, upon exchanging the summation and integral by applying the dominated convergence theorem, we obtain
\begin{align*}
_1\tilde{F}_1 \left (m_1+m_2;\,m_2;\,\mathbf{\Omega},\,\mathbf{\Phi} \right ) = \frac{1}{2\pi\imath} \oint_0 \left( \prod_{j=1}^{n} \frac{1}{1-z\mathsf{f}_j} \right) \sum_{k=0}^\infty \frac{(m_1+m_2)_k}{(m_1)_k (n)_k}\frac{(n\nu)^k}{z^{k+1}}\,\dif z\text{.}
\end{align*}
Defining $N=n-1$, we can further write
\begin{align*}
&_1\tilde{F}_1 \left (m_1+m_2;\,m_1;\,\mathbf{\Omega},\,\mathbf{\Phi}\right )  \\
~& \hspace*{0.3cm} = \frac{N!(m_1-1)!}{(m_1+m_2-1)!}\frac{1}{2\pi\imath} \oint_0 \left( \prod_{j=1}^{n} \frac{1}{1-z\mathsf{f}_j} \right) \sum_{k=N}^\infty \frac{\Gamma(m_1+m_2+k-N)}{\Gamma(m_1+k-N)k!}\frac{(n\nu)^{k-N}}{z^{k-N+1}}\,\dif z  \\
~& \hspace*{0.3cm} = \frac{\psi}{2\pi\imath} \oint_0 \left( \prod_{j=1}^{n} \frac{1}{1-z\mathsf{f}_j} \right) \left [\sum_{k=0}^\infty \frac{(m_1+m_2-N)_k}{k!(m_1-N)_k}\frac{(n\nu)^{k-N}}{z^{k-N+1}}\right.  - \left.\sum_{k=0}^{N-1} \frac{(m_1+m_2-N)_k}{k!(m_1-N)_k}\frac{(n\nu)^{k-N}}{z^{k-N+1}}\right ]\,\dif z 
\end{align*}
where for notational convenience we have defined 
\begin{align*}
\psi = \frac{N!(m_1-1)!(m_1+m_2-n)!}{(m_1+m_2-1)!(m_1-n)!} \text{.}  
\end{align*}
 Since the integrand is an analytic function, the second sum is zero. Further recognizing the first sum as a scalar ${}_1F_1$ hypergeometric function (up to a scaling), we may then write
\begin{align}
&_1\tilde{F}_1 \left (m_1+m_2;\,m_1;\,\mathbf{\Omega},\,\mathbf{\Phi} \right ) = \frac{\psi}{(n\nu)^N}\frac{1}{2\pi\imath} \oint_0 \left( \prod_{j=1}^{n} \frac{1}{1-z\mathsf{f}_j} \right) z^{N-1} \,_1F_1\left(m_1+m_2-N,m_1-N,\frac{n\nu}{z}\right )\,\dif z \text{,} \nonumber
\end{align}
which upon applying the transformation $z \rightarrow 1/z$  yields
\begin{align}
&_1\tilde{F}_1 \left (m_1+m_2;\,m_2;\,\mathbf{\Omega},\,\mathbf{\Phi} \right ) = \frac{\psi}{(n\nu)^N}  \frac{1}{2\pi\imath} \oint_C \left( \prod_{j=1}^{n} \frac{1}{z-\mathsf{f}_j} \right) \, _1F_1\left(m_1+m_2-N,m_1-N,n\nu z\right )\,\dif z\text{,}\nonumber 
\end{align}
where the contour $C$ now encloses counter-clockwise all the $\mathsf{f}_j$, $1\le j\le n$. Substituting this expression into \eqref{eq:eigdis} and applying the change of variable $x_k=\frac{\mathsf{f}_k}{1-\mathsf{f}_k}$, 
we obtain the desired result.

\subsection{Proof of Theorem \ref{th:wishart} (Models A and B)}
Our strategy is to evaluate the moment generating function (MGF), which is
\begin{align*}
\mathcal{M}(\lambda) & = \E\left [\e^{\lambda\sum_{k=1}^n f \left ( x_k/n \right)}\right] \text{.}
\end{align*}
Using \eqref{eq:density}, upon applying the transformations $x_j \rightarrow n x_j$ and $z \rightarrow n z$, we obtain
\begin{align}
\mathcal{M}(\lambda) 
&= \frac{K_{n}[l]}{2\pi\imath} n^{m+1} \oint_{\tilde{C}} l(nz)  Z_n(\lambda, z) \,\dif z \label{eq:MGF-I}
\end{align}
where
\begin{align*}
Z_n(\lambda, z) =   \int_{\R_+^n} \prod_{1\leq j<k\leq n}(x_k-x_j)^2 \prod_{j=1}^n 
\frac{ x_j^{m-n}e^{-nx_j} }{z-x_j}
{\rm e}^{\lambda f \left ( x_j\right)}\, \dif x_j  
\end{align*}
and the contour $\tilde{C}$ encloses now counter-clockwise all the scaled eigenvalues $x_1/n,\dots,x_n/n$ in its interior.

It will be convenient to rewrite $Z_n(\lambda, z)$ in the equivalent form:
\begin{align}
Z_n(\lambda, z) = \int_{\R_+^n} \e^{-\Phi(x_1,\dots,x_n)-\sum_{k=1}^n g(x_k)}\prod_{k=1}^n \dif x_k   \label{eq:Inew}
\end{align}
where
\begin{align}
g(x)= g(x, z) = - \lambda f(x) + \ln(z-x) \text{,} \label{eq:g}
\end{align}
with
\begin{align*}
\Phi(x_1,\dots,x_n)=-2 \sum_{1\le j < k \le n} \ln|x_j-x_k| +n\sum_{j=1}^n v_0(x_j)
\end{align*}
where we have defined
\begin{align*}
v_0(x)=x-\left(\frac{m}{n}-1\right)\ln x  \text{.}
\end{align*}
Setting $g(x) = 0$ in (\ref{eq:Inew}), we also introduce
\begin{align*}
Z_n  = \int_{\R_+^n} \e^{-\Phi(x_1,\dots,x_n)}  \prod_{k=1}^n \dif x_k \text{,}
\end{align*}
which is simply a constant.

With this formulation, the results from \cite{Chen} and also \cite{Chen2}, derived based on the Coulomb fluid method, now immediately suggest that as $n \to \infty$ with $m/n \to c$,  
\begin{align} 
Z_n(\lambda, z) \approx Z_n \e^{- \frac{S_1 (z) }{2} - S_2 (z)  }\label{eq:CF}
\end{align}
where
\begin{align}
S_1(z) &= \int_a^b g(x, z) \varrho(x, z) \, \dif x \label{eq:S1}\\
S_2(z)  &= n \int_a^b g(x, z) \tilde{\sigma}_0(x) \, \dif x \text{.} \label{eq:S2} 
\end{align}
Here $a= (1-\sqrt{c})^2$ and $b= (1+\sqrt{c})^2$, as defined in the theorem statement, whilst
\begin{align}
\tilde{\sigma}_0(x)=\frac{1}{2\pi}\frac{\sqrt{(b-x)(x-a)}}{x}\text{, } \quad x\in[a,b] \label{eq:sigma_0}
\end{align}
which is the Mar{\v c}enko-Pastur law (see \cite{Dyson2,Marcenko}).  Also, 
\begin{align}
\varrho(x, z )= -\lambda\rho_1 (x)+\rho_2  (x, z)   \label{eq:varrhoExp}
\end{align}
where 
\begin{align*}
\rho_1 (x)=\frac{1}{2\pi^2 \sqrt{(b-x)(x-a)}}\mathcal{P} \int_a^b \frac{\sqrt{(b-y)(y-a)}}{y-x} f'(y) \,\dif y
\end{align*}
and
\begin{align}
\rho_2 (x, z) &= \frac{1}{2\pi^2 \sqrt{(b-x)(x-a)}}\mathcal{P} \int_a^b \frac{ \sqrt{(b-y)(y-a)}}{y-x} \frac{1}{y-z} \,\dif y \text{, } \quad x\in[a,b] \text{.} \nonumber
\end{align}
Multiplying the numerator and the denominator of the integrand by $\sqrt{(b-y)(y-a)}$, applying a partial fraction decomposition and integrating using the identities \eqref{eq:264}, \eqref{eq:266} and \eqref{eq:263neg}, we obtain
\begin{align}
\rho_2 (x, z)= \frac{1}{2\pi\sqrt{(b-x)(x-a)}}\left(\frac{\sqrt{(z - a)(z - b)}}{z-x}-1\right)  \text{.}  \label{eq:rhoh}
\end{align}

Consider $S_1$. Plugging \eqref{eq:varrhoExp} along with \eqref{eq:g} into \eqref{eq:S1} yields a quadratic in $\lambda$,
\begin{align}
S_1 (z) =-\lambda^2 \sigma^2  - 2\lambda\bar{\mu}(z) - A_1(z)  \label{eq:S1f}
\end{align}
where $\sigma^2$ takes the form \eqref{eq:V1}, the linear coefficient $\bar{\mu}( \cdot )$ takes the form \eqref{eq:muVal} since (see \ref{sec:equivalence} for details)
\begin{align}
\bar{\mu}(z) &= \frac{1}{2}\int_a^b \left [  f(x) \rho_2 (x, z)   + \ln(z-x) \rho_1(x) \right] \dif x \nonumber\\
&= \int_a^b   f(x) \rho_2 (x, z) \,\dif x \text{,} \label{eq:equivp2}
\end{align}
whilst the constant term is
 \begin{align}
A_1(z) &=-\int_a^b   \ln(z-x) \rho_2  (x,  z  ) \, \dif x  \text{.} \label{eq:A2Def}
\end{align}
Note that this last term is independent of the linear statistic $f( \cdot)$ and will not contribute to either the asymptotic mean or variance.

Now consider $S_2$. Plugging \eqref{eq:g} and \eqref{eq:sigma_0} into \eqref{eq:S2} gives
\begin{align}
S_2 (z) &=-n \left ( \lambda \mu + A_2( z ) \right )  \label{eq:S2f}
\end{align}
where $\mu$ takes the form \eqref{eq:mu}, whilst  
\begin{align}
A_2( z ) = -\frac{1}{2\pi}\int_a^b \ln(z-x) \frac{\sqrt{(b-x)(x-a)}}{x}\, \dif x \label{eq:A_2}
\end{align}
is a constant which will contribute to the asymptotic mean in the sequel.

Combining \eqref{eq:CF} together with \eqref{eq:S1f} and \eqref{eq:S2f}, we obtain 
\begin{align}
Z_n(\lambda,z) \approx Z_n\e^{\lambda^2 \frac{\sigma^2}{2} + \lambda \left [ n\mu + \bar{\mu}(z) \right ] + \frac{ A_1(z) }{2} + nA_2(z) } \text{.} \label{eq:ratio2}
\end{align}

Substituting \eqref{eq:ratio2} into \eqref{eq:MGF-I}, we obtain for large $n$  
 \begin{align}
\mathcal{M}(\lambda) \propto \mathcal{I}(\lambda)\e^{\lambda^2 \frac{\sigma^2}{2}+\lambda  n\mu}   \label{eq:MGFap2}
\end{align}
with
\begin{align}
\mathcal{I}(\lambda) =  \oint_{\tilde{C}} l(nz)  \e^{\lambda \bar{\mu}(z) + \frac{ A_1( z) }{2} + nA_2( z ) }  \,\dif z  \label{eq:Iint}\text{.}
\end{align}
The remaining challenge is to deal with this contour integral, seeking a solution for large $n$, which will be addressed using the Laplace approximation (or saddlepoint) method.  We will consider Model A and Model B in turn.

\subsubsection{Saddlepoint Evaluation for Model A} \label{sec:SaddleA}
In this case,  $l(nz)=\exp\left ( n \frac{\delta}{1+\delta}z\right )$.  Plugging this into \eqref{eq:Iint}, the Laplace approximation yields (see \cite[Chapter 4]{Olver} or \cite[Chapter 7]{Bleistein} for more details):
\begin{align}
\mathcal{I}(\lambda) = \oint_{\tilde{C}} \e^{-np(z)}q(z) \,\dif z \approx \exp(-np(z_0))\sqrt{\frac{2\pi}{n}}\frac{q(z_0)}{\sqrt{p''(z_0)}} \label{eq:sp-approx}
\end{align}
for which
\begin{align}
p(z)&=-\left ( \frac{\delta}{1+\delta} z +A_2(z) \right)\label{eq:p}\\
q(z)&=\exp\left ( \lambda \bar{\mu}(z) + \frac{A_1(z)}{2}   
\right )\label{eq:q}\end{align}
and where $z_0$ is the saddlepoint, which is the solution to
\begin{align*}
p'(z_0) = 0 \text{.}  
\end{align*}

The final task is to evaluate $z_0$.  (Note that a similar saddlepoint problem was addressed in \cite{Onatski}; we follow the same approach.)   Substituting \eqref{eq:A_2} into \eqref{eq:p} and taking the derivative with respect to $z$, we find that $z_0$ must satisfy
\begin{align}
\frac{\delta}{1+\delta} + A_2'(z_0)  = 0\text{,}\label{eq:derivative}
\end{align}
where
\begin{align}
A_2'(z_0) &=\frac{1}{2\pi} \int_a^b \frac{\sqrt{(b-x)(x-a)}}{x(x-z_0)} \,\dif x \label{eq:A2I5}\\
          &=\frac{-z_0+c-1+\sqrt{(z_0-a)(z_0-b)}}{2z_0}  \label{eq:stieltjes}
\end{align}
for $z_0~\notin~[a,b]~\cup~\{0\}$. The second equality is obtained by multiplying the numerator and the denominator of the integrand in \eqref{eq:A2I5} by $\sqrt{(b-x)(x-a)}$, applying a partial fraction decomposition and integrating using the identities \eqref{eq:263}, \eqref{eq:264} and \eqref{eq:263neg}. (Note that \eqref{eq:stieltjes} is related to the ``usual'' Stieltjes transform of the Mar{\v c}enko-Pastur law (see e.g., \cite{Bai93}) via the changes of variable $x \rightarrow cx$.) The solution $z_0$ to \eqref{eq:derivative} is
\begin{align*}
z_0 = \frac{(1+c\delta)(1+\delta)}{\delta}\text{.}
\end{align*}
\begin{remark}\label{rem:smodelA} 
In order to have a solution to \eqref{eq:derivative} outside $[a,b]$, we have to take the following specific branches for the square root in \eqref{eq:stieltjes}:
\begin{list}{$\bullet$}{\leftmargin=2em}
\item When $0<\delta\le 1/\sqrt{c}$, the branch is chosen so that the signs of the real and imaginary part of $\sqrt{(z_0-a)(z_0-b)}$ match those of $z_0-c-1$;
\item When $\delta > 1/\sqrt{c}$, the signs are chosen to be opposite.
\end{list}
The square root in both cases then evaluates to the common form:
\begin{align}
\sqrt{(z_0-a)(z_0-b)}=\frac{1-c\delta^2}{\delta}\text{.} \label{eq:racinespike}
\end{align}
When substituting for $q(z_0)$ in (\ref{eq:sp-approx}) using (\ref{eq:q}),  (\ref{eq:A2Def}),  and (\ref{eq:rhoh}), we again encounter the same square root (this is the one indicated in the theorem statement). For this, the same branch should be taken as indicated above.
\end{remark}

\subsubsection{Saddlepoint Evaluation for Model B} \label{sec:SaddleB}
In this case, $l(nz)=\,_0F_1(m-n+1,n^2 \nu z)$. In order to apply a method similar to Model A, we require an ``exponential type'' representation or approximation for $n$ large for the hypergeometric function $_0F_1$. To this end, note that $_0F_1$ can be written in terms of a modified Bessel function $I_{\alpha}(z)$ \cite{Abramowitz}
\begin{align*}
_0F_1(\alpha+1,z)=\Gamma(\alpha+1)z^{-\frac{\alpha}{2}} I_{\alpha}(2\sqrt{z})\text{.}
\end{align*}
Thus we have
\begin{align*}
l(nz)=\frac{\Gamma(m-n+1)}{n^{m-n} (\nu z)^\frac{m-n}{2}} I_{m-n}\left(2n\sqrt{\nu z}\right)\text{.}
\end{align*}
Moreover, when the parameter $\alpha \rightarrow \infty$ and $|\arg(z)| < \frac{\pi}{2}-\varepsilon$, for $\varepsilon >0$, we have the asymptotic expansion \cite{Abramowitz,Olver} 
\begin{align*}
I_{\alpha}(\alpha z) \approx \frac{\e^{\alpha \sqrt{1+z^2}+\alpha\ln\left ( \frac{z}{1+\sqrt{1+z^2}}\right )}}{\sqrt{2\pi \alpha}(1+z^2)^{1/4}} \left (1+\frac{3z^2-2}{24\alpha(1+z^2)^\frac{3}{2}}+\cdots \right )\text{.}
\end{align*}
Since $m-n\sim n(c-1) \rightarrow \infty$ when $n \rightarrow \infty$, we can use the above result together with the Stirling approximation \cite{Tweddle} to obtain that, when $n \rightarrow \infty$,
\begin{align*}
l(nz)\approx\frac{(c-1)^{c-\frac{1}{2}}}{e^{n(c-1)}(\nu z)^{\frac{{n(c-1)}}{2}}} \frac{\exp{\left [ n\sqrt{(c-1)^2+4\nu z} + n(c-1)\ln \left ( \frac{2\sqrt{\nu z}}{c-1+\sqrt{(c-1)^2+4\nu z}}\right )\right ]}}{(c-1)^2+4\nu z}
\end{align*}
for $|\arg(2\sqrt{\nu z}/(c-1))| < \frac{\pi}{2}-\varepsilon$, $\varepsilon >0$. Using this approximation in \eqref{eq:Iint} and keeping only the terms dependent on $z$, the resulting contour integral takes the Laplace form \eqref{eq:sp-approx} with $p(z)$ and $q(z)$ as follows:
\begin{align}
p(z)&=-\left (  \sqrt{(c-1)^2+4\nu z} + (1-c)\ln \left ( c-1+\sqrt{(c-1)^2+4\nu z}\right ) + A_2(z)\right)\label{eq:p3}\\
q(z)&=\exp\left ( \lambda \bar{\mu}(z) + \frac{A_1(z)}{2}+(c-1)\ln(2\sqrt{\nu})- \ln((c-1)^2+4\nu z)\right ) \text{.} \label{eq:q3}
\end{align}
Substituting \eqref{eq:A_2} into \eqref{eq:p3} and taking the derivative with respect to $z$, we find that the saddlepoint $z_0$ must satisfy
\begin{align}
\frac{2\nu}{c-1+\sqrt{(c-1)^2+4\nu z_0}} + A_2'(z_0)  = 0\text{,}\label{eq:derivative2}
\end{align}
where $A_2'(z_0)$ is defined by \eqref{eq:stieltjes}. The solution $z_0$ to \eqref{eq:derivative2} is
\begin{align*}
z_0 = \frac{(1+\nu)(c+\nu)}{\nu}\text{.}
\end{align*}
\begin{remark}\label{rem:smodelB} 
In order to have a solution to \eqref{eq:derivative2} outside $[a,b]$, we have to take the following specific branches for the square root in \eqref{eq:stieltjes}:
\begin{list}{$\bullet$}{\leftmargin=2em}
\item When $0<\nu\le\sqrt{c}$, the branch is chosen so that the signs of the real and imaginary part of $\sqrt{(z_0-a)(z_0-b)}$ match those of $z_0-c-1$;
\item When $\nu > \sqrt{c}$, the signs are chosen to be opposite.
\end{list}
The square root in both cases then evaluates to the common form:
\begin{align}
\sqrt{(z_0-a)(z_0-b)}=\frac{c}{\nu}-\nu\text{.} \label{eq:racineF}
\end{align}
As described previously, substituting for $q(z_0)$ in (\ref{eq:sp-approx}) using (\ref{eq:q3}),  (\ref{eq:A2Def}),  and (\ref{eq:rhoh}) reveals the same square root (indicated in the theorem statement). For this, the same branch should be taken as indicated above.
\end{remark}

\subsubsection{Completing the Proof for Both Models}

Finally, plugging \eqref{eq:p} and \eqref{eq:q} (or \eqref{eq:p3} and \eqref{eq:q3}) into \eqref{eq:sp-approx}, we can rewrite \eqref{eq:MGFap2} as
\begin{align*}
\mathcal{M}(\lambda) \propto  \exp \left ( \lambda^2\frac{\sigma^2}{2}+\lambda\left (n\mu+ \bar{\mu}(z_0)\right ) + r(z_0)\right )
\end{align*}
as $n \rightarrow \infty$, where the function $r(z_0)$ does not depend on $\lambda$. This is recognized as the MGF of a Gaussian distribution with mean $n\mu+ \bar{\mu}(z_0)$ and variance $\sigma^2$. \qed

\subsection{Proof of Theorem \ref{th:F} (Model C)} \label{sec:SaddleC}
The proof is similar to Theorem \ref{th:wishart}. We will evaluate the MGF 
\begin{align*}
\mathcal{M}(\lambda) & = \E\left [\e^{\lambda\sum_{k=1}^n f \left ( x_k \right)}\right] \text{.} 
\end{align*}
Using \eqref{eq:densityF} we obtain
\begin{align}
\mathcal{M}(\lambda) 
&= \frac{K_{n}}{2\pi\imath} \oint_{\tilde{C}} l(z)  Z_n(\lambda, z) \,\dif z \label{eq:MGF-IF}
\end{align}
where
\begin{align*}
Z_n(\lambda, z) =   \int_{(0,1)^n}  \prod_{1\le j < k \le n} (\mathsf{f}_k-\mathsf{f}_j)^2\prod_{j=1}^n \frac{\mathsf{f}_j^{m_1-n}(1-\mathsf{f}_j)^{m_2-n}}{z-\mathsf{f}_j}
{\rm e}^{\lambda f \left ( \frac{\mathsf{f}_j}{1-\mathsf{f}_j}\right)}\, \dif \mathsf{f}_j  
\end{align*}
and
\begin{align*}
l(z)&=\,_1F_1\left(m_1+m_2-n+1,m_1-n+1,n\nu z\right ) \text{.}
\end{align*}
Following the derivation of Theorem \ref{th:wishart}, in this case we obtain
\begin{align*}
v_0(x)&=(1-c_1)\ln x + (1-c_2)\ln (1-x)\\
\tilde{\sigma}_0(x)&=\frac{c_1+c_2}{2\pi} \frac{\sqrt{(b-x)(x-a)}}{x(1-x)}\text{, } \quad x\in[a,b] 
\end{align*}
with $a$ and $b$ defined as in the theorem statement (see \cite{Chen2} for more details, which considered the non-spike scenario with a specific linear statistic). Once again, using the result of \cite{Chen} we have, as $n\rightarrow \infty$ such that $m_1/n\rightarrow c_1$ and $m_2/n\rightarrow c_2$,
\begin{align} 
Z_n(\lambda, z) \approx Z_n \e^{- \frac{S_1 (z) }{2} - S_2 (z)  }\label{eq:CF1F1}
\end{align}
with
\begin{align*}
S_1 (z) =-\lambda^2 \sigma_\F^2  - 2\lambda\bar{\mu}_\F(z) - A_{\F,1}(z) 
\end{align*}
where $\sigma_\F^2$ takes the form \eqref{eq:V1F}, the linear coefficient $\bar{\mu}_\F( \cdot )$ takes the form \eqref{eq:muValF}, whilst the constant term is
 \begin{align}
A_{\F,1}(z) &=-\int_a^b   \ln(z-x) \rho_2  (x,  z  ) \, \dif x\text{,} \label{eq:A2Def1F1}
\end{align}
where
\begin{align}
\rho_2 (x, z)= \frac{1}{2\pi\sqrt{(b-x)(x-a)}}\left(\frac{\sqrt{(z - a)(z - b)}}{z-x}-1\right) \text{.} \label{eq:rhoh2}
\end{align}
This constant term $A_{\F,1}$ is independent of the linear statistic $f( \cdot)$ and will not contribute to either the asymptotic mean or variance.

Now consider $S_2$. We have
\begin{align*}
S_2 (z) &=-n\left (\lambda \mu_\F - A_{\F,2}( z ) \right )
\end{align*}
where $\mu_\F$ takes the form \eqref{eq:muF}, whilst  
\begin{align*}
A_{\F,2}( z ) = -\frac{c_1+c_2}{2\pi}\int_a^b \ln(z-x)  \frac{\sqrt{(b-x)(x-a)}}{x(1-x)}\, \dif x  \text{.}
\end{align*}
Substituting \eqref{eq:CF1F1} into \eqref{eq:MGF-IF} we obtain that, as $n \rightarrow \infty$ with $m_1/n\rightarrow c_1$  and  $m_2/n\rightarrow c_2$,
\begin{align}
\mathcal{M}(\lambda) \propto   \mathcal{I}(\lambda) \e^{\lambda^2\frac{\sigma^2_\F}{2}+\lambda n\mu_\F}\label{eq:MGFap21F1}
\end{align}
with
\begin{align*}
\mathcal{I}(\lambda) =  \oint_{C} l(z)  \e^{\lambda \bar{\mu}_\F(z) + \frac{ A_{\F,1}( z) }{2} + nA_{\F,2}( z ) }  \,\dif z  \text{.}
\end{align*}

As before, to deal with the contour integral, we seek a saddlepoint approximation for large $n$. Using the asymptotic approximation of $l(z)$ given in Lemma \ref{lem:1F1} with $u=m_1/n+m_2/n-1$, $v=m_1/n-1$ and $\gamma=\nu$ (see \ref{sec:lemma1}) we have, as $n \rightarrow \infty$
\begin{align*}
\mathcal{I}(\lambda) \propto  \oint_{C} \frac{\e^{n\nu z t(z)}t(z)^{n(c_1+c_2-1)+1}(t(z)-1)^{-nc_2}}{\sqrt{(t(z)-1)^2(1-c_1)+c_2(2t(z)-1)}}  \e^{\lambda \bar{\mu}_\F(z) + \frac{ A_{\F,1}( z) }{2} + nA_{\F,2}( z ) }  \,\dif z  \text{,}
\end{align*}
where 
\begin{align}
t(z)= \frac{\nu z +1-c_1+\sqrt{(c_1-1-\nu z)^2-4\nu z (1-c_1-c_2)}}{2\nu z} \text{.}  \nonumber 
\end{align}
The Laplace approximation yields
\begin{align}
\mathcal{I}(\lambda) = \oint_{C} \e^{-np(z)}q(z) \,\dif z \approx \exp(-np(z_0))\sqrt{\frac{2\pi}{n}}\frac{q(z_0)}{\sqrt{p''(z_0)}} \label{eq:sp-approx1F1}
\end{align}
for which
\begin{align}
p(z)&=-\left ( \nu z t(z)+(c_1+c_2-1)\ln(t(z))-c_2\ln(t(z)-1) + A_{\F,2}(z)\right)\label{eq:p1F1c}\\
q(z)&=\frac{\exp\left ( \lambda \bar{\mu}_\F(z) + \frac{A_{\F,1}(z)}{2}
\right )}{\sqrt{(t(z)-1)^2(1-c_1)+c_2(2t(z)-1)}}\label{eq:q1F1c}\end{align}
and where $z_0$ is the saddlepoint, which is the solution to
\begin{align*}
p'(z_0) = 0 \text{.}
\end{align*}
This satisfies
\begin{align}
\nu(t(z_0)+t'(z_0)z_0)+(c_1+c_2-1)\frac{t'(z_0)}{t(z_0)} - c_2 \frac{t'(z_0)}{t(z_0)-1} +A_{\F,2}'(z_0) = 0\text{,}\label{eq:derivativep1F1}
\end{align}
where
\begin{align}
A_{\F,2}'(z_0)&=-\frac{c_1+c_2}{2\pi} \int_a^b \frac{\sqrt{(b-x)(x-a)}}{x(1-x)(z_0-x)}\, \dif x \label{eq:A21F1'I5}\\
&=\frac{c_1+c_2}{2} \frac{\sqrt{ab}+\sqrt{(z_0-a)(z_0-b)}-z_0\left ( \sqrt{(1-a)(1-b)}+\sqrt{ab} \right )}{z_0(1-z_0)} \label{eq:A2'1F1}
\end{align}
for $z_0~\notin~[a,b]~\cup~\{0,1\}$. The second equality is obtained by multiplying the numerator and the denominator of the integrand in \eqref{eq:A21F1'I5} by $\sqrt{(b-x)(x-a)}$, applying a partial fraction decomposition and integrating using the identities \eqref{eq:263} and \eqref{eq:263neg}. The solution $z_0$ to \eqref{eq:A2'1F1} is
\begin{align*}
z_0=\frac{(1+\nu)(c_1+\nu)}{\nu(c_1+c_2+\nu)}\text{.}
\end{align*}
\begin{remark}\label{rem:smodelC} 
In order to have a solution to \eqref{eq:derivativep1F1} outside $[a,b]$, we have to take the following specific branches for the square root $\sqrt{(z_0-a)(z_0-b)}$ in \eqref{eq:A2'1F1}:
\begin{list}{$\bullet$}{\leftmargin=2em}
\item When $0<\nu\le\tilde{c}=\frac{c_1+\sqrt{c_1 c_2(c_1+c_2-1)}}{c_2-1}$, the branch is chosen so that the signs of the real and imaginary parts match those of $1-c_1+(c_1+c_2) z_0$;
\item When $\nu > \tilde{c}$, the signs are chosen to be opposite.
\end{list}
The square root in both cases then evaluates to the common form:
\begin{align}
\sqrt{(z_0-a)(z_0-b)}&=\frac{c_1(c_1+c_2)+2c_1\nu-(c_2-1)\nu^2}{\nu(c_1+c_2+\nu)(c_1+c_2)}\text{.}\label{eq:racineFmatrix}
\end{align}
As described previously, substituting for $q(z_0)$ in (\ref{eq:sp-approx}) using (\ref{eq:q1F1c}),  (\ref{eq:A2Def1F1}),  and \eqref{eq:rhoh2} reveals the same square root (indicated in the theorem statement). For this, the same branch should be taken as indicated above.
\end{remark}

Finally, plugging \eqref{eq:p1F1c} and \eqref{eq:q1F1c} into \eqref{eq:sp-approx1F1}, we can rewrite \eqref{eq:MGFap21F1} as
\begin{align*}
\mathcal{M}(\lambda) \propto  \exp \left ( \lambda^2\frac{\sigma_\F^2}{2}+\lambda\left (n\mu_\F+ \bar{\mu}_\F(z_0)\right ) + r(z_0)\right ) 
\end{align*}
 as $n\rightarrow \infty$ such that $m_1/n\rightarrow c_1$ and $m_2/n\rightarrow c_2$, where the function $r(z_0)$ doesn't depend on $\lambda$. We recognise this as the MGF of the Gaussian distribution with mean $n\mu_\F +\bar{\mu}_\F(z_0)$  and variance $\sigma^2_\F\text{.}$
\qed

\subsection{Proof of Corollary \ref{corollaryAB} and \ref{corollaryC}}\label{sec:corollary}
Consider the expressions for $\bar{\mu}(z_0)$ in \eqref{eq:muVal} and $\bar{\mu}_\F(z_0)$ in \eqref{eq:muValF}. The term between brackets within the integrand is
\begin{align*}
I=\left (\frac{\sqrt{(z_0-a)(z_0-b)}}{z_0-x}-1 \right )\text{.}
\end{align*}
For Model A, replacing $z_0$ by its value \eqref{eq:exprz0} and the square root of the numerator by its value \eqref{eq:racinespike}, and setting $\delta=0$ , we obtain $I=0$. Similarly using \eqref{eq:exprz0} and \eqref{eq:racineF} for Model B, \eqref{eq:exprz0F} and \eqref{eq:racineFmatrix} for Model C, and setting $\nu=0$, we obtain $I=0$. Thus, $\bar{\mu}(z_0)=0$ and $\bar{\mu}_\F(z_0)=0$.

\appendix

\section{Statement and Proof of Lemma \ref{lem:1F1}} \label{sec:lemma1}
\begin{lemma}\label{lem:1F1}
Let $u>0$ and $v>0$ such that $nu+1>0$ and $n(v-u)\notin \mathbb{N}$. Assume that $\Re z>1$ and $\gamma\ge 0$. As $n\rightarrow \infty$, we have
\begin{align}
&\,_1F_1(nu+1,nv+1,n\gamma z) \nonumber \\
& \hspace*{1cm} \approx  \frac{1}{\sqrt{2\pi n}}\frac{\Gamma(n(u-v))\Gamma(nv+1)}{\Gamma(nu+1)}\frac{\e^{n\gamma z t(z)}t(z)^{nu+1}(t(z)-1)^{n(u-v)}}{\sqrt{-v(t(z)-1)^2+(u-v)(2t(z)-1)}}\text{,} \label{eq:1F1approxlem}
\end{align}
where $t(z)= \frac{\gamma z -v+\sqrt{(v-\gamma z)^2+4\gamma z u}}{2\gamma z}$.
\end{lemma}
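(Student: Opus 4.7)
The plan is to express $_1F_1(nu+1,nv+1,n\gamma z)$ as a Laplace-type integral and evaluate it by the saddle-point method as $n\to\infty$. The natural starting point is the Euler--Kummer representation
$$_1F_1(a,b,z) = \frac{\Gamma(b)}{\Gamma(a)\Gamma(b-a)}\int_0^1 e^{zt}\,t^{a-1}(1-t)^{b-a-1}\,dt,$$
specialized with $a=nu+1$, $b=nv+1$, $z\to n\gamma z$. Isolating the $n$-dependent factors puts the integrand in the form $\tau^{-1}(1-\tau)^{-1}\,e^{n\phi(\tau)}$ with
$$\phi(\tau) = \gamma z\,\tau + u\log\tau + (v-u)\log(1-\tau).$$

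The saddle point is then found by setting $\phi'(\tau)=0$ and clearing denominators, which gives the quadratic $\gamma z\,\tau^2 + (v-\gamma z)\tau - u = 0$; its relevant root is precisely the $t(z)$ of the statement. A direct computation of $\phi''(\tau) = -u/\tau^2 - (v-u)/(1-\tau)^2$ yields, at the saddle,
$$\phi''(t(z))\,t(z)^2\,(1-t(z))^2 \;=\; -v\,(t(z)-1)^2 + (u-v)\,(2t(z)-1),$$
which is exactly the expression under the square root in \eqref{eq:1F1approxlem}. The exponential factor is $e^{n\phi(t(z))} = e^{n\gamma z\,t(z)}\,t(z)^{nu}(1-t(z))^{n(v-u)}$, and Laplace's formula then produces $\sqrt{2\pi/(n|\phi''(t(z))|)}\,\psi(t(z))\,e^{n\phi(t(z))}$, with the extra factor of $t(z)$ in $t(z)^{nu+1}$ arising from combining $\psi(t(z))=t(z)^{-1}(1-t(z))^{-1}$ with the geometric factors in $|\phi''|^{-1/2}$.

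Finally, I would combine this with the Gamma prefactor $\Gamma(nv+1)/[\Gamma(nu+1)\Gamma(n(v-u))]$. Using the reflection formula $\Gamma(n(v-u))\Gamma(1-n(v-u))=\pi/\sin(\pi n(v-u))$ together with $\Gamma(1-n(v-u)) = n(u-v)\,\Gamma(n(u-v))$, the factor $1/\Gamma(n(v-u))$ converts into a multiple of $\Gamma(n(u-v))$; the trigonometric factor $\sin(\pi n(v-u))/\pi$ thereby produced is absorbed together with the branch factor $(-1)^{n(v-u)}$ coming from rewriting $(1-t(z))^{n(v-u)}$ as $(t(z)-1)^{n(u-v)}$ via $(1-t)^{n(v-u)}=(-1)^{n(v-u)}/(t-1)^{n(u-v)}$. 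This lands on the prefactor displayed in \eqref{eq:1F1approxlem}.

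The main obstacle is that the regime actually used in the paper, $u>v$ (e.g.\ $u=c_1+c_2-1>v=c_1-1$ in the proof of Theorem \ref{th:F}), has $\Re b<\Re a$, so the Euler--Kummer integral does not converge in its elementary form. To justify the above I would either deform the integration contour to a Pochhammer figure-eight loop around $0$ and $1$, which furnishes an integral representation valid for arbitrary parameters provided $n(v-u)\notin\mathbb{Z}$ (explaining the hypothesis $n(v-u)\notin\mathbb{N}$), or start from the Hankel-type representation
$$_1F_1(a,b,z) \;=\; \frac{\Gamma(b)}{2\pi i}\oint_{\mathcal{H}} e^{s}\,s^{a-b}(s-z)^{-a}\,ds$$
and substitute $s=n\gamma z(1-\tau)$, which maps the Hankel saddle onto the root $\tau=t(z)$ of the same quadratic. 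Either route demands careful tracking of the branches of $t^{a-1}$ and $(1-t)^{b-a-1}$ through the contour deformation, and verifying that the steepest-descent path can be routed through $t(z)$ with $\phi''$ effectively negative along it; the hypotheses $\Re z>1$ and $\gamma\ge 0$ are precisely what makes this geometric step go through, and it is the bulk of the technical work.
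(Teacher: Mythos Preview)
Your saddle-point analysis is correct and matches the paper's exactly: the same quadratic $\gamma z\,\tau^2+(v-\gamma z)\tau-u=0$, the same root $t(z)$, and the same identity $\phi''(t)\,t^2(1-t)^2=-v(t-1)^2+(u-v)(2t-1)$. The difference is only in the starting point. Rather than beginning with the Euler--Kummer integral (which, as you note, fails for $u>v$) and then repairing it by a Pochhammer deformation plus the reflection-formula gymnastics to convert $1/\Gamma(n(v-u))$ into $\Gamma(n(u-v))$, the paper starts directly from Slater's loop representation
\[
{}_1F_1(nu+1,nv+1,n\gamma z)=\frac{\Gamma(n(u-v))\,\Gamma(nv+1)}{2\pi i\,\Gamma(nu+1)}\oint_C\frac{t^{nu}\e^{n\gamma z t}}{(t-1)^{n(u-v)+1}}\,\dif t,
\]
where $C$ runs from $0$, once anti-clockwise around $1$, and back to $0$. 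This representation is valid precisely under the stated hypotheses ($nu+1>0$ gives integrability at the endpoint $t=0$; $n(v-u)\notin\mathbb N$ makes the branch around $t=1$ nontrivial) and already carries the Gamma prefactor $\Gamma(n(u-v))\Gamma(nv+1)/\Gamma(nu+1)$ and the power $(t-1)^{n(u-v)}$ in the form they appear in \eqref{eq:1F1approxlem}. One then applies Laplace with $p(t)=-\bigl(\gamma z t+u\ln t+(v-u)\ln(t-1)\bigr)$, $q(t)=(t-1)^{-1}$ and reads off the result immediately. Your route would land in the same place, but the bookkeeping you flag in your last paragraph---the extra $n(u-v)$ factor, the $\sin(\pi n(v-u))/\pi$, and the branch factor $(-1)^{n(v-u)}$---is entirely an artifact of having started from the wrong representation; Slater's loop integral sidesteps all of it.
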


\begin{proof}
Under the prescribed conditions on $u$ and $v$, we may use the following integral representation \cite{Slater}
\begin{align}
\,_1F_1(nu+1,nv+1,n\gamma z)=\frac{\Gamma(n(u-v))\Gamma(nv+1)}{2\imath\pi\Gamma(nu+1)}\oint_C\frac{t^{nu}e^{n\gamma z t}}{(t-1)^{n(u-v)+1}}\,\dif t\text{,} \label{eq:slater}
\end{align}
where the contour $C$ starts at 0, traverses anti-clockwise around 1 and returns to 0. For large $n$, the Laplace approximation yields
\begin{align}
\oint_C\frac{t^{nu}e^{n\gamma z t}}{(t-1)^{n(u-v)+1}}\,\dif t\approx\sqrt{\frac{2\pi}{n}}\frac{q(t_0)\e^{-np(t_0)}}{\sqrt{p''(t_0)}}\text{,} \label{eq:laplace1F1}
\end{align}
for which
\begin{align}
p(t)&=-\left ( \gamma z t + u\ln(t)+(v-u)\ln(t-1)\right )\nonumber\\
q(t)&=(t-1)^{-1}\label{eq:q(t)}
\end{align}
and where $t_0$ is the saddlepoint, which is the solution to
\begin{align*}
p'(t_0) = 0 \text{.}
\end{align*}
Thus, $t_0$ must satisfy
\begin{align*}
\gamma z t_0^2+t_0(v-\gamma z)-u=0\text{,}
\end{align*}
with $t_0 \notin \{0,1$\}. There is one solution which lies outside the contour for $\Re z > 1$:
\begin{align*}
t_0(z)=\frac{\gamma z-v+\sqrt{(v-\gamma z)^2+4u\gamma z}}{2\gamma z}\text{.}
\end{align*}
Furthermore, we have
\begin{align*}
p''(t_0(z))=-\frac{-v(t_0(z)-1)^2+(u-v)(2t_0(z)-1)}{t_0(z)^2(t_0(z)-1)^2}
\end{align*}
so that, taking the root with the correct phase factor (see \cite[Chapter 4]{Olver} or \cite[Chapter 7]{Bleistein} for more details), we get
\begin{align*}
\frac{1}{\sqrt{p''(t_0(z))}}=\imath\frac{t_0(z)(t_0(z)-1)}{\sqrt{-v(t_0(z)-1)^2+(u-v)(2t_0(z)-1)}}  \text{.}
\end{align*}
Substituting this quantity with \eqref{eq:q(t)} into \eqref{eq:laplace1F1} together with \eqref{eq:slater}, we find the desired result \eqref{eq:1F1approxlem}.
\end{proof}

\section{Equivalence Between $\int_a^b f(x) \rho_2 (x, z)\dif x$ and  $\int_a^b \ln(z-x) \rho_1(x) \dif x$ in
 \eqref{eq:equivp2}} \label{sec:equivalence}

We want to show that, for $z \notin [a,b]$,
\[\int_a^b f(x) \rho_2 (x, z)\dif x = \int_a^b \ln(z-x) \rho_1(x) \dif x\text{,}\]
where
\begin{align}
\int_a^b f(x) \rho_2 (x, z)\dif x &=\frac{1}{2\pi} \int_a^b \frac{f(x)}{\sqrt{(b-x)(x-a)}}\left (\frac{\sqrt{(z-a)(z-b)}}{z-x}-1 \right )  \, \dif x \label{eq:int1}
\end{align}
and
\begin{align}
\int_a^b \ln(z-x) \rho_1(x)\, \dif x=\frac{1}{2\pi^2} \int_a^b \frac{\ln(z-x)}{\sqrt{(b-x)(x-a)}} \left [ \mathcal{P} \int_a^b \frac{f'(y)\sqrt{(b-y)(y-a)}}{y-x} \,\dif y \right ] \, \dif x \text{.} \label{eq:int2}
\end{align}
This identity is not straightforward and appears difficult to show directly using the above expressions. Thus, here we adopt an approach based on first showing that the derivative with respect to $z$ of \eqref{eq:int1} and \eqref{eq:int2} are equal. First considering \eqref{eq:int1}, we have
\begin{align}
\frac{\dif}{\dif z}\left [\int_a^b f(x) \rho_2 (x, z)\,\dif x\right ] &=\frac{1}{2\pi\sqrt{(z-a)(z-b)}} \int_a^b \frac{f(x)\left ( (a+b)(x+z)-2ab \right ) }{2(z-x)^2\sqrt{(b-x)(x-a)}} \, \dif x\text{.} \label{int1f}
\end{align}
Now taking the derivative of \eqref{eq:int2}, we get
\begin{align}
& \frac{\dif}{\dif z}\left [\int_a^b \ln(z-x) \rho_1(x)\,\dif x \right ] \nonumber \\
& \hspace*{2cm} = \int_a^b \frac{\sqrt{(b-y)(y-a)}}{2\pi^2}f'(y) \left [ \mathcal{P} \int_a^b \frac{\dif x}{(x-y)(z-x)\sqrt{(b-x)(x-a)}} \right ] \, \dif y \nonumber \\
& \hspace*{2cm} =\int_a^b \frac{\sqrt{(b-y)(y-a)}}{2\pi^2(z-y)}f'(y) \left [ \mathcal{P} \int_a^b \frac{(z-x)^{-1}+(x-y)^{-1}}{\sqrt{(b-x)(x-a)}} \dif x\right ] \, \dif y\text{,} \nonumber
\end{align}
which, after applying the identities \eqref{eq:266} and \eqref{eq:263neg}, yields
\begin{align*}
\frac{\dif}{\dif z}\left [\int_a^b \ln(z-x) \rho_1(x)\,\dif x \right ]&=\frac{1}{2\pi\sqrt{(z-a)(z-b)}}\int_a^b \frac{\sqrt{(b-y)(y-a)}}{y-z}f'(y)\,\dif y\text{.}
\end{align*}
Application of integration by parts to the last integral gives
\begin{align*}
\frac{\dif}{\dif z}\left [\int_a^b \ln(z-x) \rho_1(x)\, \dif x \right ]&=\frac{1}{2\pi\sqrt{(z-a)(z-b)}}\ \int_a^b \frac{f(y)\left ( (a+b)(y+z)-2ab \right ) }{2(z-y)^2\sqrt{(b-y)(y-a)}} \, \dif y\text{,}
\end{align*}
which is the same as \eqref{int1f}. So we have proved that
\begin{align}
\int_a^b \ln(z-x) \rho_1(x)\, \dif x = \int_a^b f(x) \rho_2 (x, z)\dif x + \text{Constant} \text{.}\label{eq:equalityConst}
\end{align}

Now, note that with $z$ such that $\Im z=0$, as $\Re z \rightarrow \infty$, 
\begin{align*}
\frac{\sqrt{(z-a)(z-b)}}{z-x}-1 = \frac{x-\frac{a+b}{2}}{z}+ O\left (\frac{1}{z^2} \right )\text{.}
\end{align*}
Plugging this expression into \eqref{eq:int1} gives
\begin{align*}
\int_a^b f(x) \rho_2 (x, z)\dif x = \frac{1}{z}  \int_a^b \frac{f(x)}{\sqrt{(b-x)(x-a)}}\left (x-\frac{a+b}{2} \right )  \, \dif x + O\left (\frac{1}{z^2} \right )
\end{align*}
which tends to zero as $\Re z \rightarrow \infty$. Furthermore, with
\begin{align*}
\ln(z-x) = \ln(z)-\sum_{k=1}^\infty\frac{x^k}{z^k} \text{,}
\end{align*}
the expression in \eqref{eq:int2} becomes, upon interchanging the integrals, 
\begin{align}
\int_a^b \ln(z-x) \rho_1(x)\, \dif x&=\frac{1}{2\pi^2} \int_a^b f'(y)\sqrt{(b-y)(y-a)} \left [ \mathcal{P} \int_a^b \frac{\ln(z)\,\dif x}{(y-x)\sqrt{(b-x)(x-a)}}\right.\nonumber \\ \nonumber 
& \hspace*{1cm} -\left.\sum_{k=1}^\infty \frac{\mathcal{P}}{z^k}\int_a^b\frac{x^k\,\dif x}{(y-x)\sqrt{(b-x)(x-a)}}  \right ] \, \dif y  \text{.}  
\end{align}
Here, the first principal value integral is zero by \eqref{eq:266}, whilst the remaining terms tends to zero when $z$ is such that $\Re z \rightarrow \infty$ and $\Im z=0$.

Consequently, taking $z$ such that $\Re z \rightarrow \infty$ and $\Im z=0$ in \eqref{eq:equalityConst}, we find that the constant term is zero, thus proving the result.

\section{Useful Formulas} \label{sec:recall}
For the derivations of our results, we will require numerous integrals; these are summarized in \eqref{eq:248}--\eqref{eq:G}. Note that for all definite integrals involving the variable $t$, these are valid for $\Re t >b$, while in all cases we assume $0<a<b$. 
\begin{align}
&\int_a^b \frac{\ln(x+t)}{\sqrt{(b-x)(x-a)}}\, \dif x=2\pi \ln \left (\frac{\sqrt{t+a}+\sqrt{t+b}}{2} \right ) \label{eq:248}\\
&\int_a^b \frac{\dif x}{\sqrt{(x+t)(b-x)(x-a)}} =\frac{\pi}{\sqrt{(t+a)(t+b)}} \label{eq:263}\\
&\int_a^b \frac{\dif x}{\sqrt{(b-x)(x-a)}}=\pi \label{eq:264}\\
&\int_a^b \frac{x\,\dif x}{\sqrt{(b-x)(x-a)}}=\pi\frac{a+b}{2} \label{eq:265}\\
&\int_a^b \frac{\ln(x+t)}{\sqrt{(b-x)(x-a)}x}\,\dif x=\frac{\pi}{\sqrt{ab}}\ln \left ( \frac{(\sqrt{ab}+\sqrt{(t+a)(t+b)})^2-t^2}{(\sqrt{a}+\sqrt{b})^2}\right ) \label{eq:250}\\
&\int_a^b \frac{x\ln(x+t)}{\sqrt{(b-x)(x-a)}}\,\dif x=\pi\frac{(\sqrt{a+t}-\sqrt{b+t})^2}{2}+\pi\frac{a+b}{2}\ln \left ( \frac{2(t+(a+t)(b+t))+a+b}{4}\right ) \label{eq:251}\\
&\int_a^b \frac{\ln(1-x)}{\sqrt{(b-x)(x-a)}}\, \dif x=2\pi \ln \left (\frac{\sqrt{1-a}+\sqrt{1-b}}{2} \right ) \label{eq:253}\\
&\int_a^b \frac{\ln(1-x)}{\sqrt{(b-x)(x-a)}x}\,\dif x=\frac{\pi}{\sqrt{ab}}\ln \left ( \frac{1-(\sqrt{ab}-\sqrt{(1-a)(1-b)})^2}{(\sqrt{a}+\sqrt{b})^2}\right ) \label{eq:254}\\
&\int_a^b \frac{\ln(1-x)}{\sqrt{(b-x)(x-a)}(x-1)}\,\dif x=\frac{2\pi}{\sqrt{(1-a)(1-b)}}\ln \left ( \frac{1}{2\sqrt{1-a}}+\frac{1}{2\sqrt{1-b}}\right ) \label{eq:255}\\
&\mathcal{P}\int_a^b \frac{1}{(y-x)\sqrt{(b-x)(x-a)}}\,\dif x=0\label{eq:266}\\
&\mathcal{P} \int_a^b \frac{\sqrt{(b-x)(x-a)}}{x(y-x)} \, \dif x =\pi \left (1-\frac{\sqrt{ab}}{y} \right )\label{eq:267}\\
&\mathcal{P} \int_a^b \frac{\sqrt{(b-y)(y-a)}}{(1-y)(x-y)} \,\dif y = \pi \left ( \frac{\sqrt{(1-a)(1-b)}}{1-x}-1\right )\label{eq:269}\\
&\int_a^b \frac{1}{(x-t)\sqrt{(b-x)(x-a)}}\,\dif x= - \frac{\pi}{\sqrt{(t-a)(t-b)}} \label{eq:263neg}\\
&\int_a^b  \frac{1}{\sqrt{(b-x)(x-a)}}\left (\frac{\sqrt{(z-a)(z-b)}}{t-x}-1 \right ) \, \dif x=0\label{eq:pvnull}\\
&\mathcal{P} \int_a^b \frac{\sqrt{(b-x)(x-a)}}{y-x} \,\dif x = \pi \left (y-\frac{a+b}{2} \right )\label{eq:268}
\end{align}
Moreover, for $z \in \mathbb{C}$,
\begin{align}
&\int_a^b \frac{\ln(1-x)}{(x-z)\sqrt{(b-x)(x-a)}} \,\dif x=\frac{\pi}{A} \ln \left ( \frac{2A+2z-a-b}{2A\sqrt{(1-a)(1-b)}+z(2-a-b)-a-b+2ab}\right )\label{eq:H} \\
&\int_a^b \frac{\ln(x+t) \, \dif x}{(x-z)\sqrt{(b-x)(x-a)}}\nonumber\\
& \hspace*{0.5cm} =\frac{\pi}{A}\ln\left ( \frac{2A+(a+b-2z)(t+z)+2A\sqrt{A^2+(a+b-2z)(t+z)+(t+z)^2}}{(t+z)^2(a+b-2z+2A)}\right ) \label{eq:G}
\end{align}
with $A=\sqrt{(z-a)(z-b)}$. 

Equations \eqref{eq:248}--\eqref{eq:269} are given in \cite{Chen2}, whilst \eqref{eq:263neg} is a slight modification of \eqref{eq:263}, and \eqref{eq:pvnull} follows using \eqref{eq:263} and \eqref{eq:264}.  The expression \eqref{eq:268} follows upon multiplying the numerator and the denominator of the integrand by $\sqrt{(b-x)(x-a)}$, applying a partial fraction decomposition, then invoking \eqref{eq:264}, \eqref{eq:265} and \eqref{eq:266}.   For \eqref{eq:H} and \eqref{eq:G}, the derivations are more involved, and we describe each in turn.

For \eqref{eq:H}, we use the parametrization  
\begin{align}
\ln(1-x) = \int_0^1 \frac{x}{\lambda x-1} \,\dif \lambda \nonumber
\end{align}
and the partial fraction decomposition
\begin{align}
\frac{x}{(x-1/\lambda)(x-z)}=\frac{1/\lambda}{(x-1/\lambda)(1/\lambda-z)}-\frac{z}{(x-z)(1/\lambda-z)}\nonumber
\end{align}
to arrive at
\begin{align}
\int_a^b \frac{\ln(1-x)\,\dif x}{(x-z)\sqrt{(b-x)(x-a)}} =&\int_0^1 \frac{\dif \lambda}{1-\lambda z} \int_a^b \left ( \frac{1/\lambda}{x-1/\lambda} + \frac{z}{z-x} \right ) \frac{\dif x}{\sqrt{(b-x)(x-a)}}\nonumber\\
=& \pi \int_0^1 \left ( \frac{-1}{\sqrt{(1-\lambda a)(1-\lambda b)}} + \frac{z}{\sqrt{(z-a)(z-b)}}\right ) \frac{\dif\lambda}{ 1-\lambda z}\text{.}\nonumber
\end{align}
The last equation was obtained by invoking \eqref{eq:263} and \eqref{eq:263neg}. From a further change of variable $x=1- \lambda z$, we have
\begin{align}
&\int_a^b \frac{\ln(1-x)}{(x-z)\sqrt{(b-x)(x-a)}} \,\dif x= \pi \int_{1-z}^1 \left ( \frac{-1}{\sqrt{(z-a+ax)(z-b+bx)}} + \frac{1}{A}\right ) \frac{\dif x}{x}\nonumber\\
& \quad =\frac{\pi}{A} \Bigg ( -\ln(1-z)
+ \left . \left [\ln\left ( \frac{2A\sqrt{(z-a+ax)(z-b+bx)}+2A^2+x(z(a+b)-2ab)}{x}\right ) \right ]_{1-z}^1\right )\nonumber\\
& \quad =\frac{\pi}{A} \ln \left ( \frac{2A+2z-a-b}{2A\sqrt{(1-a)(1-b)}+z(2-a-b)-a-b+2ab}\right )\text{.}\nonumber
\end{align}
Now consider \eqref{eq:G}. In this case, we use the relation
\begin{align*}
\ln(x+t) = \ln t + \int_0^1 \frac{x}{t+ yx} \dif y
\end{align*}
to give
\begin{align*}
&\int_a^b \frac{\ln(x+t) \, \dif x}{(x-z)\sqrt{(b-x)(x-a)}}\\
& \hspace*{1cm} =\int_a^b \frac{\ln t \, \dif x}{(x-z)\sqrt{(b-x)(x-a)}}+\int_0^1 \left [ \int_a^b \frac{1}{\sqrt{(b-x)(x-a)}}\frac{x}{t+yx}\frac{\dif x}{x-z}\right ] \dif y{.}
\end{align*}
The first integral is given by \eqref{eq:263neg}, whereas the double integral is
\begin{align*}
& \int_0^1 \left [ \int_a^b \frac{1}{\sqrt{(b-x)(x-a)}} \frac{x}{x+\frac{t}{y}}\frac{\dif x}{x-z} \right ] \frac{\dif y}{y} \\
 & \hspace*{1cm} = \int_0^1\left [\int_a^b \frac{1}{\sqrt{(b-x)(x-a)}} \left ( \frac{z}{x-z}+ \frac{\frac{t}{y}}{x+\frac{t}{y}}\right )\dif x \right ]\frac{\dif y}{y \left (\frac{t}{y}+z\right )}\\
    & \hspace*{1cm} =  \int_0^1 \frac{z}{y\left (\frac{t}{y}+z\right )}\left [\int_a^b \frac{ \, \dif x}{(x-z)\sqrt{(b-x)(x-a)}}\right ]\dif y \\
    & \hspace*{2cm} +\int_0^1 \frac{\frac{t}{y}}{y \left (\frac{t}{y}+z\right)} \left [ \int_a^b \frac{1}{\sqrt{(b-x)(x-a)}} \frac{\dif x}{x+\frac{t}{y}} \right ] \dif y \text{.}
\end{align*}
The first term on the right-hand side above is obtained using \eqref{eq:263neg}, whereas the second is
\begin{align*}
&\int_0^1 \frac{1}{y}\left [\frac{\frac{t}{y}}{\frac{t}{y}+z} \int_a^b \frac{1}{\sqrt{(b-x)(x-a)}} \frac{\dif x}{x+\frac{t}{y}}\right ] \dif y\\
 & \hspace*{1cm} = \pi\int_0^1 \frac{1}{y}\frac{\frac{t}{y}}{\frac{t}{y}+z}  \frac{ \dif y}{\sqrt{\left (b+\frac{t}{y}\right ) \left (a+\frac{t}{y}\right)}} \quad \quad \text{[using \eqref{eq:263}]}\\
 & \hspace*{1cm} = \pi \int_{t+z}^{\infty} \frac{\, \dif x}{x\sqrt{(x-z+a)(x-z+b)}} \quad \quad \text{[setting $x= t/y +z$]}\\
 & \hspace*{1cm} = \pi \int_{t+z}^{\infty} \frac{\, \dif x}{x\sqrt{-(b-z)(z-a)+(b+a-2z)x+x^2}}\\
 & \hspace*{1cm} = \frac{\pi}{A} \ln\left ( \frac{2A+(a+b-2z)(t+z)+2A\sqrt{A^2+(a+b-2z)(t+z)+(t+z)^2}}{(t+z)(a+b-2z+2A)}\right )\text{,}
\end{align*}
using the first identity in \cite[Eq. 2.266]{Gradshteyn}. Combining the previous calculations, we get the result.

\section*{Acknowedgements}

Thanks to Prof. Iain Johnstone at Stanford University for useful discussions in relation to the non-central multivariate $F$ matrices (Model C) and for pointing out associated applications.

\section*{References}
\bibliographystyle{elsarticle-num} 
\bibliography{biblio}
\end{document}